\newtheorem{theorem}{Theorem}[section]
\newtheorem{proposition}[theorem]{Proposition}
\newtheorem{lemma}[theorem]{Lemma}
\newtheorem{definition}[theorem]{Definition}
\numberwithin{equation}{section} 
\numberwithin{figure}{section}  
\newcommand \la \langle
\newcommand \ra \rangle
\newcommand \Ecal {\mathcal{E}} 
\newcommand \Kcal {\mathcal{K}}
\newcommand \Hcal {\mathcal{H}}
\newcommand \underdel {\underline \partial}
\newcommand \trianglerightNEW \triangleright
\newcommand \auth {\textsc}
\newcommand \bei {\begin{itemize}}
\newcommand \eei {\end{itemize}}
\newcommand \be {\begin{equation}}
\newcommand \bel {\be\label}
\newcommand \ee {\end{equation}}
\newcommand \del \partial
\newcommand \RR {\mathbb R}
\newcommand \eps \epsilon
\let\oldmarginpar\marginpar
\renewcommand\marginpar[1]{\-\oldmarginpar[\raggedleft\footnotesize #1]%
{\raggedright\footnotesize #1}}
\begin{document}

\title{\bf \Large 
Global solution to the Klein-Gordon-Zakharov equations with uniform energy bounds}

\author{Shijie Dong\footnote{Fudan University, School of Mathematical Sciences, 220 Handan Road, Shanghai, 200433, China.
%\newline
Email: shijiedong1991@hotmail.com.
\newline MSC code: 35L05, 35L52, 35L70.}
}

%\newline 
%{\sl Keywords.} Wave and Klein-Gordon system; global existence; sharp pointwise decay; hyperboloidal foliation method.
%%\newline
%%MSC codes: 
%}
%}

\date{\today}
\maketitle

\begin{abstract} 
{
We are interested in the Klein-Gordon-Zakharov equations in $\mathbb{R}^{1+3}$, and we aim 
%to prove the existence of the small global solution and to demonstrate the pointwise asymptotic behavior of the solution. In particular, we want 
to show that the energy for the global solution to the equations is uniformly bounded, and we do not require the compactness assumption on the initial data. To achieve these goals, the key is to apply Alinhac's ghost weight energy estimates adapted to the Klein-Gordon equations.
}
\end{abstract}
%\newline
{\sl Keywords.} Klein-Gordon-Zakharov model; small global solution; uniform energy bounds.
%\newpage 

\tableofcontents

%==============================================================================================
\section{Introduction}

\subsection{Model problem and main result}

We are interested in the Klein-Gordon-Zakharov model in $\RR^{1+3}$, which describes the nonlinear interaction between Langmuir waves and ion-acoustic waves. The equations read
\bel{eq:model-KGZ}
\aligned
-\Box E + E = - n E,
\\
-\Box n = \Delta |E|^2,
\endaligned
\ee
and the initial data are prescribed on the slice $t=t_0 =0$
\bel{eq:model-ID}
\aligned
\big( E, \del_t E \big) (t_0, \cdot)
=
\big( E_0, E_1 \big),
\qquad
\big( n, \del_t n \big) (t_0, \cdot)
=
\big( n_0, n_1 \big).
\endaligned
\ee
The unknowns include $E = (E^1, E^2, E^3)$ taking values in $\RR^3$ and $n$ taking values in $\RR$, which can be regarded as a Klein-Gordon component and a wave component respectively.
As usual, the wave operator is denoted by $\Box = \del_\alpha \del^\alpha = -\del_t \del_t + \Delta$, with the Laplace operator $\Delta = \del_a \del^a$. 
We take the signature $(-, +, +, +)$ in the spacetime $\RR^{1+3}$, and the indices are raised or lowered by the Minkowski metric $m = \text{diag} (-1, 1, 1, 1)$. The Greek letters $\alpha, \beta, \cdots \in \{0, 1, 2, 3\}$ are used to denote the spacetime indices, and Latin letters $a, b, \cdots \in \{1, 2, 3\}$ are used to represent the space indices, and we adopt the Einstein summation convention. The notation $\lesssim$ will be frequently used, and we write $A\lesssim B$ to indicate $A \leq C B$ with $C$  a universal constant. The Japanese bracket $\langle A \rangle = \sqrt{1+|A|^2}$ is used in the usual way, and we use the abbreviation $\| \cdot \| = \| \cdot \|_{L^2}$ to denote the $L^2$ norm.

%and the functions $(E_0, E_1, n_0, n_1)$ are not assumed to have compact support.

In addition to showing the small data global existence result to the model problem \eqref{eq:model-KGZ}, and to deriving the pointwise asymptotic behavior of the solution, it is also interesting to verify whether the energy for the solution is uniformly bounded or blows up at infinity. We now provide the statement of the main result.

\begin{theorem}\label{thm:main1}
Consider the Klein-Gordon-Zakharov equations in \eqref{eq:model-KGZ}, and let $N \geq 15$ be an integer. There exits $\eps_0 >0$, such that for all $\eps < \eps_0$, and all initial data satisfying the smallness condition
\be 
\sum_{|I| \leq N+2} \| \langle x \rangle^{|I| + 1} \del^I E_0 \| + \sum_{|I| \leq N+1} \| \langle x \rangle^{|I| + 2} \del^I E_1 \|
+
\sum_{|I| \leq N+1} \| \langle x \rangle^{|I| } n_0 \| + \sum_{|I| \leq N} \| \langle x \rangle^{|I| + 1} n_1 \|
\leq \eps,
\ee
the Cauchy problem \eqref{eq:model-KGZ}--\eqref{eq:model-ID} admits a global-in-time solution $(E, n)$, 
which satisfies the following sharp pointwise decay results
\bel{eq:thm-decay}
|E(t, x)| \lesssim \langle t \rangle^{-3/2},
\quad
|n(t, x) | \lesssim \langle t \rangle^{-1} \langle t-r\rangle^{-1/2}.
\ee
With $\Gamma \in \{ \del_\alpha, L_a, \Omega_{ab} \}$, the following uniform energy estimates are also valid
\bel{eq:thm-E}
\aligned
\| \del \Gamma^I E(t) \|+ \| \Gamma^I E(t) \|
\lesssim 1,
\qquad
|I| \leq N+1,
\\
\| \Gamma^I n(t) \|
\lesssim 1,
\qquad
|I| \leq N+1.
\endaligned
\ee

\end{theorem}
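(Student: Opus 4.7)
I would run a classical bootstrap/continuity argument, working simultaneously at high and low order with the vector fields $\Gamma \in \{\del_\alpha, L_a, \Omega_{ab}\}$ common to the wave and Klein--Gordon operators; the scaling $S=t\del_t+x^a\del_a$ is deliberately omitted since $[\Box-1,S]\neq 0$. For the Klein--Gordon component I would use an Alinhac-type ghost-weight energy
\[
\Ecal_m(t,\Gamma^I E) = \int_{\RR^3}\bigl(|\del\Gamma^I E|^2 + |\Gamma^I E|^2\bigr)\, e^{q(r-t)}\,dx,
\]
with $q$ a bounded increasing function of $r-t$ (e.g.\ $q(s)=\arctan s$), and for $n$ the standard wave energy. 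The weighted smallness on the initial data is exactly what is needed to control these quantities at $t=0$: since $L_a|_{t=0}=x_a\del_t$, iterating $\Gamma$ generates powers of $\la x\ra$, matched by the assumed $\la x\ra^{|I|+1}$, $\la x\ra^{|I|+2}$ weights on $E_0,E_1$ and the corresponding weights on $n_0,n_1$. The bootstrap ansatz on the maximal existence interval $[0,T^*)$ would be that the top-order energies are bounded by $C_1\eps$ and the pointwise bounds \eqref{eq:thm-decay} hold at low order with constant $C_1\eps$, for $C_1$ large and to be recovered with $C_1/2$.

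The core of the argument has three linked steps. First, commute $\Gamma^I$ through the system, using $[\Box,\Gamma]=[\Box-1,\Gamma]=0$, to obtain
\[
(\Box-1)\Gamma^I E = -\Gamma^I(nE),\qquad \Box\Gamma^I n = \Gamma^I\bigl(\Delta|E|^2\bigr),
\]
then apply the ghost-weight identity to $\Gamma^I E$ (producing $\Ecal_m$ plus a spacetime integral of $e^{q(r-t)}|\bdel\Gamma^I E|^2$) and the standard energy identity to $\Gamma^I n$. Second, convert energies into pointwise decay: the Klein--Gordon Klainerman--Sobolev inequality yields $|\Gamma^I E|\lesssim\la t\ra^{-3/2}$ at order $\le N/2$; for the wave component, a weighted Sobolev-type estimate tailored to $\{L_a,\Omega_{ab}\}$ (compensating for the absence of $S$ by the $\la x\ra$ weights on the data and by integration of the Duhamel representation along characteristics) delivers the sharp $\la t\ra^{-1}\la t-r\ra^{-1/2}$ decay for $n$. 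Third, close the bootstrap by distributing $\Gamma^I$ across each product, placing the low-order factor in $L^\infty$ via the improved decay and the high-order factor in $L^2$ via the bootstrap; the source $nE$ is benign because $\la t\ra^{-3/2}\cdot\la t\ra^{-1}$ is time-integrable.

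The main obstacle is the uniform (non-growing) bound for the $n$-energy: the source $\Delta|E|^2$ carries two derivatives and its brute $L^2$ bound combined with Gronwall produces a $\log(1+t)$ loss. The decisive tool is the Alinhac ghost weight on $E$, which supplies a finite spacetime reservoir $\int_0^\infty\|e^{q(r-t)/2}\bdel\Gamma^I E\|^2\,dt < \infty$ of the ``good'' derivative $\bdel=\del_t+\del_r$. Since the resonant direction for the $\Delta|E|^2$ nonlinearity is precisely tangential to the Klein--Gordon cone, one can trade one of the two $\del$'s in $\Delta$ for a $\bdel$ modulo commutators involving $\Omega_{ab},L_a$ already controlled by the bootstrap; this turns the threatened logarithmic loss into a convergent-in-time integral of order $(C_1\eps)^2$ and delivers the uniform estimate \eqref{eq:thm-E}. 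The absence of compact support in the data is absorbed entirely by the $\la x\ra$ weights in the hypothesis, which propagate through $L_a$ without further loss.
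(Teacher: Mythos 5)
There is a genuine gap, and it sits exactly at the crux of the theorem. Your closing claim that ``the source $nE$ is benign because $\la t\ra^{-3/2}\cdot\la t\ra^{-1}$ is time-integrable'' multiplies two sup-norm decay rates, which does not correspond to any admissible H\"older splitting in an $L^1_tL^2_x$ estimate: at top order you must put $\Gamma^{I_2}E$ (with $|I_2|=N+1$) in $L^2$ and $\Gamma^{I_1}n$ in $L^\infty$, and $\|n\|_{L^\infty}\lesssim \la t\ra^{-1}$ alone produces precisely the $\log(1+t)$ loss the theorem is designed to rule out. Your ghost weight is the \emph{wave} version: it only yields a spacetime reservoir for the good derivatives $\bdel\,\Gamma^I E$, which is useless against the undifferentiated $E$ in $nE$. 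The paper's key mechanism is that for Klein--Gordon the mass term in the ghost-weight identity gives in addition
\begin{equation*}
\int_{t_0}^{t}\int_{\RR^3}\frac{|\Gamma^I E|^2}{\la r-t'\ra^{1+\delta}}\,dx\,dt'\lesssim 1 ,
\end{equation*}
i.e.\ spacetime integrability of $\Gamma^I E$ itself; pairing this with the extra $\la t-r\ra^{-1/2}$ decay of $n$ and Cauchy--Schwarz in $t$ is what closes the top-order $E$-energy. Relatedly, your plan to handle $\Delta|E|^2$ by ``trading a $\del$ for a $\bdel$'' has no algebraic basis ($\Delta|E|^2$ is not a null form), and in fact that term is not the real difficulty: $\|\del\del\,\Gamma^I|E|^2\|\lesssim \la t\ra^{-3/2}$ is already integrable since $E$ is Klein--Gordon.

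Two further omissions. First, the conclusion \eqref{eq:thm-E} includes $\|\Gamma^I n\|_{L^2}\lesssim 1$ for the \emph{undifferentiated} wave component, which is not controlled by the natural wave energy; the paper obtains it through the Katayama decomposition $n=n^0+\Delta n^\Delta$ with $-\Box n^0=0$, $-\Box n^\Delta=|E|^2$, so that $\Gamma^I\Delta n^\Delta$ is bounded by second derivatives of $n^\Delta$. Your proposal has no substitute for this. Second, you exclude the scaling vector field, but the paper's route to the interior decay $|n|\lesssim\la t+r\ra^{-1}\la t-r\ra^{-1/2}$ (which feeds the ghost-weight pairing above) goes through the conformal energy bound $\|L_0\del\Gamma^I n^\Delta\|\lesssim\la t\ra^{1/2}$ combined with $\la t-r\ra|\del w|\lesssim|L_0w|+|\Gamma w|$; the Kubota--Yokoyama $L^\infty$--$L^\infty$ estimate you allude to only covers the exterior $|x|\gtrsim t$. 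Your ``weighted Sobolev estimate tailored to $\{L_a,\Omega_{ab}\}$'' is not specified and, as stated, does not produce the $\la t-r\ra^{-1/2}$ gain away from the light cone. (The choice of bootstrap versus the paper's contraction mapping is immaterial by comparison, though note the Klainerman--Sobolev inequality used in the paper requires data up to time $2t$, which is why a fixed-point formulation is adopted there.)
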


In terms of small data global existence result for the Klein-Gordon-Zakharov equations and the asymptotic behavior of the solution, there exist a few proofs, see for instance \cite{OTT, Tsutaya, Katayama12a, Dong1811}. 
%On the other hand, if the initial data are compactly supported, the argument in \cite{PLF-YM-book, Dong1811} is expected to show the solution to the Klein-Gordon-Zakharov equations has uniformly bounded energy (although it has never been explicitly written out). 
So the main contribution in Theorem \ref{thm:main1} is that the energy for the solution to the Klein-Gordon-Zakharov equations is shown to be uniformly bounded.
% even if the initial data do not have compact support. 

In order to prove the energy for $E, n$ is uniformly bounded, the key is to apply Alinhac's ghost weight method adapted to Klein-Gordon equations, which was used in \cite{Dong2005} by the author when dealing with a coupled wave and Klein-Gordon system in $\RR^{1+2}$. Originally, the ghost weight method was applied to wave equations with null nonlinearities \cite{Alinhac1}, and allows one to benefit from the $\langle t-r \rangle$ decay when some good derivatives $(x_a/r) \del_t + \del_a$ acting on the wave components. But we find in \cite{Dong2005} that the ghost weight energy estimates on Klein-Gordon equations provide us with some new and strong result, i.e. we can benefit from the $\langle t-r \rangle$ decay for the Klein-Gordon components without any derivatives (or with good derivatives $(x_a/r) \del_t + \del_a$). 
The analysis in the proof of Theorem \ref{thm:main1} can also be expected to treat more general systems of coupled wave and Klein-Gordon equations.

\subsection{Brief history and motivation} 

We first very briefly recall the pioneering work in the study of pure wave and pure Klein-Gordon equations in $\RR^{1+3}$.
The fact that wave equations with quadratic null nonlinearities admit global-in-time solutions was first proved in the seminal work \cite{Klainerman86} by Klainerman and \cite{Christodoulou} by Christodoulou. On the other hand, in the breakthrough by Klainerman \cite{Klainerman85} and Shatah \cite{Shatah} the Klein-Gordon equations with quadratic nonlinearities were shown to possess small global solutions.

Inspired by the aforementioned pioneering work on pure wave and pure Klein-Gordon equations in $\RR^{1+3}$, the coupled wave and Klein-Gordon systems have been actively studied since decades ago. 
Such studies are motivated by the interest of understanding general PDE's, or are motivated by wave and Klein-Gordon systems derived from physical models.
%including the Dirac-Klein-Gordon model, the Dirac-Proca model, the Klein-Gordon-Zakharov model, the Maxwell-Klein-Gordon model, the Einstein-Klein-Gordon model and many others.

To our knowledge, the first study on this subject was due to Bachelot \cite{Bachelot} on the Dirac-Klein-Gordon equations. After that, Georgiev \cite{Georgiev} proved the global existence result for strong null nonlinearities (i.e. $\del_\alpha u \del_\beta v - \del_\beta u \del_\alpha v$, $\alpha, \beta \in \{0, 1, 2, 3\}$). Later on, more physical models governed by the wave and Klein-Gordon systems were studied, for example the Klein-Gordon-Zakharov equations \cite{OTT, Tsutaya, Katayama12a, Dong1811}, the Maxwell-Klein-Gordon equations \cite{PsarelliA, PsarelliT, Klainerman-QW-SY, Fang}, the Dirac-Proca equations \cite{Tsutsumi}, the Einstein-Klein-Gordon equations \cite{PLF-YM-cmp, PLF-YM-arXiv1, Wang, Ionescu-P, Ionescu-P2}, and the $U(1)$ electroweak standard model \cite{DLW}. Besides, in view of pure PDE's, there also exist some other results, see for instance \cite{Katayama12a, PLF-YM-book, Katayama18, Dong1811, Dong1912}.

Back to the Klein-Gordon-Zakharov equations appearing in plasma physics, one refers to its physical background in \cite{Zakharov, Dendy}. Besides of the existence of small global solution to the equations in $\RR^{1+3}$, there exist many other interesting results concerning different aspects of the equations. In \cite{OTT2}, Ozawa, Tsutaya, and Tsutsumi proved that the Klein-Gordon-Zakharov equations with different propagation speeds admit global solution in $\RR^{1+3}$. Later on, Masmoudi and Nakanishi \cite{Masmoudi} showed the convergence of the Klein–Gordon–Zakharov equations to the Schrodinger equation when certain parameters go to $+\infty$. Recently in the work of Shi and Wang \cite{WangShu}, the authors studied the finite time blow-up result for the Klein-Gordon-Zakharov equations with very low regularity on the initial data and with negative initial energy. Equally importantly, it is worth to mention that the Klein-Gordon-Zakharov equations in $\RR^{1+2}$ also admit global solutions and enjoy pointwise decay properties for the small solutions, see \cite{Guo, Dong2006, Ma2008, Duan-Ma}.

Recently, based on the work of Klainerman \cite{Klainerman86} and Hormander \cite{Hormander}, the hyperboloidal foliation method (essentially the vector field method on hyperboloids) was developed to deal with coupled wave and Klein-Gordon systems in \cite{PLF-YM-book, PLF-YM-cmp}. In this method, one of the key features is that we can take advantage of the $(t-r)$ decay in the nonlinearities. In order to better illustrate this, we provide with a simple example here, and more detailed discussion can be found in Section \ref{sec:Comparison}. We assume $u, v$ are solutions to homogeneous wave and Klein-Gordon equations (with nice initial data for simplicity), i.e.
$$
-\Box u = 0,
\qquad
-\Box v + v = 0,
$$ 
and thus it is reasonable to assume
$$
\aligned
&\| \del u \| \lesssim 1,
\qquad\qquad
&|\del u | \lesssim \langle t+r \rangle^{-1} \langle t-r\rangle^{-1/2},
\\
&\| \del v \| + \| v \| \lesssim 1,
\qquad\qquad
&|\del v | \lesssim \langle t+r \rangle^{-3/2}.
\endaligned
$$
On one hand, if we use the usual flat foliation (i.e. the constant $t$ foliation) of the spacetime, naive calculations give us
$$
\aligned
\int_{t_0}^t \|\del u v \| \, dt
\leq
\int_{t_0}^t \|\del u\|_{L^\infty} \|v \| + \|\del u \| \|v \|_{L^\infty} \, dt'
\lesssim
\int_{t_0}^t (\langle t'\rangle^{-1} + \langle t'\rangle^{-3/2}) \, dt'
\lesssim \log (1+t),
\endaligned
$$
and in the above we note that the $(t-r)$ decay is lost when we take sup-norm $\| \langle t+r \rangle^{-1} \langle t-r \rangle^{-1/2} \|_{L^\infty} \lesssim t^{-1}$.
On the other hand, the application of the hyperboloidal foliation method provides us with
$$
\aligned
&\int_{s_0}^s \|\del u v \|_{L^2_f(\Hcal_{s'})} \, ds'
\\
\leq
&\int_{s_0}^s \| (s'/t) \del u\|_{L^2_f(\Hcal_{s'})} \|(t/s') v \|_{L^\infty(\Hcal_{s'})} + \|\del u \|_{L^\infty(\Hcal_{s'})} \|v \|_{L^2_f(\Hcal_{s'})} \, dt'
\lesssim
\int_{s_0}^s (s'^{-3/2}) \, ds'
\lesssim 1,
\endaligned
$$
with the hyperbolic time
$$
s = \sqrt{t^2 - r^2} \leq t,
$$
and under reasonable assumptions\footnote{In fact, we also assume that $s_0\geq 2$ and the solutions $u, v$ are supported in $\{ (t, x): t \geq 2, t \geq r+1 \}$, and the notations $\Hcal_{s}, \| \cdot \|_{L^2_f(\Hcal_{s})}$ will be introduced in Section \ref{sec:Comparison}.} that
$$
\aligned
&\| (s/t) \del u \|_{L^2_f(\Hcal_{s})} \lesssim 1,
\qquad\qquad
&|\del u | \lesssim \langle t+r \rangle^{-1} \langle t-r\rangle^{-1/2} \lesssim s^{-3/2},
\\
&\| (s/t) \del v \|_{L^2_f(\Hcal_{s})} + \| v \|_{L^2_f(\Hcal_{s})} \lesssim 1,
\qquad\qquad
&|\del v | \lesssim \langle t+r \rangle^{-3/2} \lesssim s^{-3/2}.
\endaligned
$$
When using the hyperboloidal foliation method, we integrate over the new time variable $s \simeq t^{1/2} (t-r)^{1/2}$, which makes use of the $(t-r)$ decay, and this gives us uniform bound of the spacetime integral. Details can be found in Section \ref{sec:Comparison}, and one can also refer to \cite{PLF-YM-book}.

Inspired by the usage of the $(t-r)$ decay in the hyperboloidal foliation method, it is natural to ask whether one can take advantage of the $(t-r)$ decay in the constant $t$ foliation of the spacetime. We find that this can be achieved by applying Alinhac's ghost weight method adapted to Klein-Gordon equations. Recall in \cite{Dong2005} we showed that the following estimates can be derived using the ghost weight method (details can be found in Proposition \ref{prop:gst} or in \cite{Dong2005})
$$
\int_{t_0}^t \Big\| {v \over \langle t'-r \rangle^{1/2+\delta} } \Big\|^2 \, dt'
\lesssim 1,
\qquad
\delta > 0,
$$
which also lead us to uniform bounds of the spacetime integral
$$
\aligned
&\quad
\int_{t_0}^t \|\del u v \| \, dt
\\
&\leq
\int_{t_0}^t \|\langle t'-r \rangle^{1/2+\delta} \del u\|_{L^\infty} \|\langle t'-r \rangle^{-1/2-\delta} v \| + \|\del u \| \|v \|_{L^\infty} \, dt'
\\
&\lesssim
\Big(\int_{t_0}^t \|\langle t'-r \rangle^{1/2+\delta} \del u\|^2_{L^\infty} \, dt'\Big)^{1/2} \Big(\int_{t_0}^t \|\langle t'-r \rangle^{-1/2-\delta} v \|^2 \, dt'\Big)^{1/2}
+
\int_{t_0}^t \langle t' \rangle^{-3/2} \, dt'
\\
&\lesssim
\Big(\int_{t_0}^t \langle t' \rangle^{2(-1+\delta)} \, dt'\Big)^{1/2} +1 
\lesssim 1.
\endaligned
$$

The key novelty of this paper is to show the uniform energy bounds for the Klein-Gordon-Zakharov equations by applying Alinhac's ghost weight method to Klein-Gordon equations. As we have demonstrated in the simple calculations above, it is quite necessary to gain some $t-r$ decay in the nonlinearities, and this is also highly non-trivial in the flat foliation case (while the $t-r$ decay can be easily obtained from the energy estimates and the Klainerman-Sobolev inequalities in the hyperboloidal foliation case, see for instance \cite{PLF-YM-book}). We conquer this difficulty by a novel use of the scaling vector field as in \cite{Dong1912, Dong2005}, which is generally avoided in the study of coupled wave and Klein-Gordon equations.

\subsection{Reformulation of the Klein-Gordon-Zakharov equations}

In the original formulation of the Klein-Gordon-Zakharov equations, there appears wave component without derivatives in the nonlinearities. This will somewhat cause difficulties because the wave component without derivatives cannot be controlled by its nature energy. So we reformulate the equations as
\bel{eq:model-re}
\aligned
&-\Box E + E = -n^0 E - \Delta n^\Delta E,
\\
&-\Box n^0 = 0,
\qquad
-\Box n^\Delta = |E|^2,
\endaligned
\ee
with initial data on $t=t_0$
\bel{eq:ID-re}
\big(E, \del_t E, n^0, \del_t n^0, n^\Delta, \del_t n^\Delta \big)(t_0)
=
(E_0, E_1, n_0, n_1, 0, 0).
\ee
The following relation can be used to estimate the original unknowns $(E, n)$ by the new unknowns $(E, n^0, n^\Delta)$
\bel{eq:relation}
n = n^0 + \Delta n^\Delta.
\ee
This kind of reformulation was used by Katayama \cite{Katayama12a} for wave equations with nonlinearities of divergence form, which provides one with better estimates for the wave components without derivatives.
In the new formulation \eqref{eq:model-re}, we note that the wave component $n^0$ is a free wave, while the wave component $n^\Delta$ is with derivatives in the nonlinearities, and thus we expect that the reformulated system of equations is easier to handle.

\subsection{Some unsolved problems}

Besides of the Klein-Gordon-Zakharov model, there exist many other physical models which can be described by coupled wave and Klein-Gordon equations, including the Dirac-Klein-Gordon model, the Maxwell-Klein-Gordon model, the Dirac-Proca model, the Einstein-Klein-Gordon model, etc.. The existence of small global solutions and pointwise decay of the solutions for the aforementioned models in $\RR^{1+3}$ have already been obtained. But whether the energy for the solutions is uniformly bounded is still unknown, except the Klein-Gordon-Zakharov model.

We take the (simplified) Einstein-Klein-Gordon equations as an example, and we recall that there appear terms of type $u \del \del v$ in the nonlinearities, with $u$ a wave component, and $v$ a Klein-Gordon component. But in that case, it seems that the best decay rate we can get for the wave component is $|u| \lesssim t^{-1}$, which does not have any extra $t-r$ decay. This observation unfortunately indicates that there is little chance for one to obtain the uniform energy bounds for the Einstein-Klein-Gordon equations by applying either the ghost weight method or the hyperboloidal foliation method.

%------------------------------------------------------------------------------

\subsection*{Outline}

The rest of this article is organised as follows.

In Section \ref{sec:BHFM}, we revisit some preliminaries on the wave equations as well as the vector field method. %Then we derive the $L^2$ and $L^\infty$ estimates for linear waves in Section \ref{sec:linear}. 
Then in Section \ref{sec:Comparison}, we provide some examples to make a comparison between the ghost weight method and the hyperboloidal foliation method.
Finally we prove Theorem \ref{thm:main1} relying on the contraction mapping argument in Section \ref{sec:Contraction}.

%==============================================================================================

\section{Preliminaries}
\label{sec:BHFM}
 
\subsection{Basic notations} 
 
We work in the $(1+3)$ dimensional Minkowski spacetime with metric $m = \text{diag} (-1, 1, 1, 1)$, which is used to raise or lower the spacetime indices $\alpha, \beta, \cdots \in \{0, 1, 2, 3\}$ and the space indices $a, b, \cdots \in \{1, 2, 3\}$. We denote one point by $(x^0, x^1, x^2, x^3) = (t, x^1, x^2, x^3)$ with its spacial radius $r = \sqrt{(x_1)^2 + (x_2)^2 + (x_3)^2}$.
 
The following vector fields will be used in the analysis
\bei
\item Translations: $\del_\alpha = \del_{x^\alpha}$.

\item Lorentz boosts: $L_a = x_a \del_t + t \del_a$.

\item Rotations: $\Omega_{ab} = x_a\del_b - x_b\del_a$.

\item Scaling vector field: $L_0 = S = t \del_t + r \del_r$.

\eei

We will use $\Gamma$ to denote a general vector filed in the set $V = \{\del_\alpha, L_a, \Omega_{ab} \}$. We recall the relation
\bel{eq:scaling}
\langle t-r \rangle |\del w| 
\lesssim |\Gamma w| + |L_0 w|,
\ee
which can give us extra $t-r$ decay for $\del w$.

\subsection{Energy estimates for wave and Klein-Gordon equations}

We will provide two kinds of energy estimates for Klein-Gordon equations (i.e. the natural energy and the ghost weight energy) and three kinds of energy estimates for wave equations (i.e. the natural energy, the ghost weight energy, and the conformal energy).

\paragraph{Energy estimates for wave and Klein-Gordon equations}

Let $v$ be the solution to
$$
- \Box v + m^2 v = f,
\qquad
\big( v, \del_t v \big)(t_0) = (v_0, v_1),
\qquad
m \geq 0,
$$
The natural energy estimates for wave and Klein-Gordon equations are well-known, which read
\bel{eq:NEE}
\Ecal_m(t, v)^{1/2}
\lesssim
\Ecal_m(t_0, v)^{1/2}
+
\int_{t_0}^t \|f \| \, dt',
\ee
with
\be 
\Ecal_m (t, v)
=
\int_{\RR^3} |\del_t v|^2 + \sum_a |\del_a v|^2 + m^2 v^2 \, dx.
\ee
For simplicity, we will use the abbreviation $\Ecal(t, v) = \Ecal_0 (t, v)$.

\paragraph{Ghost weight energy estimates for wave and Klein-Gordon equations}

We now recall the ghost weight energy estimates adapted to Klein-Gordon equations, which was used in \cite{Dong2005}.

\begin{proposition}\label{prop:gst}
Assume $v$ is the solution to 
$$
- \Box v + m^2 v = f,
\qquad
m \geq 0,
$$
then it holds
\bel{eq:GEE} 
\aligned
&\Ecal_{gst, m} (t, v)
\lesssim
\int_{\RR^3} \big( |\del_t v|^2 + \sum_a |\del_a v|^2 + m^2 v^2 \big) \, dx (t_0)
+
\int_{t_0}^t \int_{\RR^3}\big| f \del_t v \big| \, dxdt',
\endaligned
\ee
in which (with $\delta > 0$) \footnote{We use the abbreviation $\Ecal_{gst} (t, v) = \Ecal_{gst, 0} (t, v)$, and in the proof we will take $\delta\ll 1$.}
\be 
\aligned
\Ecal_{gst, m} (t, v)
=
&\int_{\RR^3} \big( |\del_t v|^2 + \sum_a |\del_a v|^2 + m^2 v^2 \big) \, dx 
+
m^2 \int_{t_0}^t \int_{\RR^3} {v^2 \over \langle r-t' \rangle^{1+\delta}}  \, dxdt'
\\
+
& \sum_{a} \int_{t_0}^t \int_{\RR^3} {1 \over \langle r-t' \rangle^{1+\delta}} \big| G_a v \big|^2 \, dxdt'.
\endaligned
\ee
\end{proposition}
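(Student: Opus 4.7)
The plan is to adapt Alinhac's ghost weight argument so as to accommodate the Klein-Gordon mass term in the energy identity. First I would introduce the ghost weight $e^{q(r-t')}$, where
$$
q(s) = \int_{-\infty}^{s} \langle \tau \rangle^{-1-\delta}\, d\tau,
$$
so that $q'(s) = \langle s \rangle^{-1-\delta}$ and $q$ is uniformly bounded (in particular $e^q \simeq 1$). Then I would multiply the equation $-\Box v + m^2 v = f$ by $e^{q(r-t')} \del_t v$ and integrate over the slab $[t_0, t] \times \RR^3$.

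The starting pointwise identity is
$$
(-\Box v + m^2 v)\del_t v = \del_t\Bigl( \tfrac12 |\del_t v|^2 + \tfrac12 \sum_a |\del_a v|^2 + \tfrac12 m^2 v^2\Bigr) - \del_a (\del_a v\, \del_t v).
$$
After multiplying by $e^{q(r-t')}$ and integrating by parts in $t'$ and $x^a$, the derivatives landing on the weight produce the factors $-q'(r-t') e^q$ from $\del_{t'}$ and $(x_a/r)\, q'(r-t') e^q$ from $\del_a$. Collecting the bulk terms, one obtains (up to spatial divergences and the boundary contribution giving $\Ecal_m(t,v)-\Ecal_m(t_0,v)$ modulo the bounded factor $e^q$) the spacetime integrand
$$
\tfrac12\, q'(r-t')\, e^q \Bigl( |\del_t v|^2 + \sum_a |\del_a v|^2 + m^2 v^2 + 2 \sum_a (x_a/r)\, \del_a v\, \del_t v \Bigr).
$$

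Next I would invoke the algebraic identity
$$
\sum_a |G_a v|^2 = |\del_t v|^2 + \sum_a |\del_a v|^2 + 2\sum_a (x_a/r)\del_a v\, \del_t v,
$$
which holds because $\sum_a (x_a/r)^2 = 1$, to collapse the bracket into $\sum_a |G_a v|^2 + m^2 v^2$. Since $q'(r-t') = \langle r-t'\rangle^{-1-\delta}$ and $e^q \simeq 1$, this yields exactly the extra spacetime integrals appearing in $\Ecal_{gst,m}$. The right-hand side is bounded by $\int_{t_0}^{t}\int_{\RR^3} e^q |f\, \del_t v|\, dx\, dt' \lesssim \int_{t_0}^{t}\int_{\RR^3} |f\, \del_t v|\, dx\, dt'$, and the estimate follows.

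The main obstacle I foresee is purely bookkeeping of signs: one must check that the positive contribution $+\tfrac12 q'(r-t')e^q(|\del_t v|^2 + \sum_a|\del_a v|^2 + m^2v^2)$ coming from the $t'$-direction combines with the spatial cross term $(x_a/r)q'(r-t')e^q \del_a v\, \del_t v$ into the perfect square $\sum_a |G_a v|^2$ rather than creating an indefinite remainder. The novelty relative to Alinhac's original wave estimate is that the mass term $m^2 v^2$ in the natural energy density is preserved by the weight trick and yields the new positive spacetime integral $\int m^2 v^2/\langle r-t'\rangle^{1+\delta}$, which is precisely the mechanism by which the Klein-Gordon unknown $v$ itself (with no derivatives and no good-derivative structure required) benefits from $(t-r)$ decay.
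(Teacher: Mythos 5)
Your proposal is correct and is essentially the paper's own proof: the paper likewise multiplies the equation by $e^{q}\del_t v$ with $q=\int_{-\infty}^{r-t}\langle s\rangle^{-1-\delta}ds$, obtains the pointwise identity whose bulk term is $\tfrac12 \langle t-r\rangle^{-1-\delta}e^q\big(\sum_a|G_a v|^2+m^2v^2\big)$, and integrates over the slab using $0\leq q\lesssim 1$. Your explicit verification of the algebraic identity $\sum_a|G_a v|^2=|\del_t v|^2+\sum_a|\del_a v|^2+2\sum_a (x_a/r)\del_a v\,\del_t v$ is exactly the step the paper leaves implicit.
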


\begin{proof}
For completeness, we revisit its proof, which is almost the same as the proof for the case of wave equations.

We multiply on both sides of the $v$ equation with $e^q \del_t v$ to get
$$
\aligned
&{1\over 2} \del_t \big( e^q (\del v)^2 + m^2 e^q v^2  \big)
-
\del_a \big( e^q \del^a v \del_t v \big)
+
{1\over 2} {e^q \over \langle t-r \rangle^{1+\delta}} \sum_a \big( G_a v \big)^2
\\
+
&{m^2\over 2} {e^q \over \langle t-r \rangle^{1+\delta}} v^2
=
e^q f \del_t v,
\endaligned
$$
in which 
$$
q = \int_{-\infty}^{r-t} \langle s \rangle^{-1 - \delta} \, ds,
\qquad
\delta > 0.
$$

Integrating the differential equality over the region $[t_0, t] \times \RR^3$ and noting $0\leq q \lesssim 1$ lead us to the desired energy estimates. 
The proof is done.
\end{proof}

\paragraph{Conformal energy estimates for wave equations}

\begin{proposition}
Let $u$ be the solution to
$$
-\Box u = f,
\qquad
\big( u, \del_t u \big) (t_0) = (u_0, u_1),
$$
then we have
\bel{eq:conformal-EE}
\Ecal_{con} (t, u)^{1/2}
\lesssim
\Ecal_{con} (t_0, u)^{1/2}
+
\int_{t_0}^t \big\| \langle t'+|x| \rangle f \big\| \, dt',
\ee
in which
\be 
\Ecal_{con} (t, u)
=
\| u \|^2 + \| L_0 u \|^2 + \sum_{a<b} \| \Omega_{ab} u \|^2 + \sum_a \| L_a u \|^2. 
\ee
\end{proposition}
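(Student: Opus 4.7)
The plan is to apply the classical Morawetz conformal multiplier. Introduce
$$K_0 := (t^2 + r^2) \del_t + 2tr \, \del_r,$$
and observe the key algebraic identity
$$K_0 = t L_0 + x^a L_a,$$
which expresses this conformal Killing field in terms of exactly the vector fields appearing in $\Ecal_{con}$.

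First, I would multiply the equation $-\Box u = f$ by $2(K_0 u + 2tu)$ and derive a divergence identity of the form
$$\del_t \mathcal{Q}_0[u] + \del_a \mathcal{Q}^a[u] = 2(K_0 u + 2tu) f,$$
with $\mathcal{Q}_0[u]$ a positive definite quadratic form in $(\del u, u)$ with $(t, r)$-dependent coefficients. The crucial step is then to establish the two-sided equivalence
$$\int_{\RR^3} \mathcal{Q}_0[u] \, dx \simeq \Ecal_{con}(t, u),$$
up to absolute constants. This is carried out by passing to the null frame $L = \del_t + \del_r$, $\underline L = \del_t - \del_r$ and using the identities
$$(t+r) L u = L_0 u + (x^a/r) L_a u, \qquad (t-r) \underline L u = L_0 u - (x^a/r) L_a u,$$
together with the angular identity $\sum_{a<b} |\Omega_{ab} u|^2 = r^2 |\slashed{\nabla} u|^2$.

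Second, integrating the divergence identity over $[t_0, t] \times \RR^3$ and applying Cauchy-Schwarz on the source, the pointwise bound
$$|K_0 u + 2tu| \lesssim \langle t+r \rangle \Big( |L_0 u| + \sum_a |L_a u| + |u| \Big),$$
which follows immediately from $K_0 = tL_0 + x^a L_a$, gives
$$\int_{\RR^3} |(K_0 u + 2tu) f| \, dx \lesssim \Ecal_{con}(t, u)^{1/2} \| \langle t + r \rangle f \|.$$
A standard Gronwall-type argument then produces \eqref{eq:conformal-EE}.

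The main obstacle will be the equivalence $\int \mathcal{Q}_0 \, dx \simeq \Ecal_{con}(t, u)$. Although the divergence identity itself is a direct (if tedious) computation, reorganising the weighted quadratic form into a clean sum of $L^2$ norms of $L_0 u$, $L_a u$, $\Omega_{ab} u$, and $u$ requires the null-frame algebra above together with careful integration by parts to absorb cross-terms of the form $tu \del_t u$: these are handled via $\del_t(tu^2) = u^2 + 2tu\del_t u$, which also explains the appearance of the pure $\|u\|^2$ summand in $\Ecal_{con}$.
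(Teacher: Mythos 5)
The paper offers no proof of this proposition at all --- it is stated as a classical fact (the Morawetz conformal energy estimate) --- so there is nothing to compare against line by line; your proposal supplies exactly the standard argument that the author is implicitly invoking, and its overall structure is correct: the multiplier $2(K_0u+2tu)$ with $K_0=(t^2+r^2)\del_t+2tr\del_r=tL_0+x^aL_a$, the resulting divergence identity, the null--frame identities $(t\pm r)(\del_t\pm\del_r)u=L_0u\pm(x^a/r)L_au$ and $\sum_{a<b}|\Omega_{ab}u|^2=r^2|\slashed{\nabla}u|^2$, the Cauchy--Schwarz bound on the source with weight $t+r\leq\langle t+r\rangle$, and the division-by-$\Ecal_{con}^{1/2}$ (``Gronwall'') step.

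The one place where your write-up is too optimistic is the claimed two-sided equivalence $\int_{\RR^3}\mathcal{Q}_0\,dx\simeq\Ecal_{con}(t,u)$ ``by null-frame algebra together with integration by parts.'' After completing squares, the multiplier identity gives
$$
\int_{\RR^3}\mathcal{Q}_0\,dx\;\simeq\;\big\|(t+r)Lu+2u\big\|^2+\big\|(t-r)\underline{L}u+2u\big\|^2+\big\|\sqrt{t^2+r^2}\,\slashed{\nabla}u\big\|^2,
$$
with $L=\del_t+\del_r$, $\underline{L}=\del_t-\del_r$. The inequality $\int\mathcal{Q}_0\lesssim\Ecal_{con}$ is indeed pure algebra (Cauchy--Schwarz on each summand). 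But the converse direction, and in particular extracting the lower bound $\|u\|^2\lesssim\int\mathcal{Q}_0\,dx$, is \emph{not} algebraic: the cross terms $\int u\,L_0u\,dx$ produce $t\,\tfrac{d}{dt}\|u\|^2$, which is not a fixed-time spatial quantity, so the pure $\|u\|^2$ summand cannot be recovered by the manipulation $\del_t(tu^2)=u^2+2tu\del_tu$ alone (that identity only explains where the $u^2$ term in the density $\mathcal{Q}_0$ comes from). What is actually needed is a Hardy-type inequality to bound $\|u\|$ by $\|(t-r)\underline{L}u+2u\|$ and the gradient terms; this is valid in three space dimensions but famously fails to give control of $\|u\|_{L^2}$ in two, which is why the equivalence is dimension-sensitive. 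Once $\|u\|\lesssim(\int\mathcal{Q}_0)^{1/2}$ is in hand, the remaining bounds $\|L_0u\|$, $\|(x^a/r)L_au\|$, $\|tL_au\|$, $\|\Omega_{ab}u\|\lesssim(\int\mathcal{Q}_0)^{1/2}$ follow from your null-frame identities by adding and subtracting the two completed squares. With that lemma inserted, your argument closes.
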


\subsection{Sobolev--type inequalities}

In order to obtain the pointwise wave decay or Klein-Gorodn decay estimates from the weighted energy bounds we recall the following two kinds of inequalities. The importance of the inequalities below, to the study of the coupled wave and Klein-Gordon equations, is that we do not need to rely on the scaling vector field $L_0 = t \del_t + x^a \del_a$.

We first revisit one special version of the Klainerman-Sobolev inequality in \cite{Klainerman852}, see the inequalities (4), (5'), and (6) therein (or Theorem 2 in \cite{Klainerman87}). Since we can avoid the use of the scaling vector field $L_0$, so this very inequality is particularly well adapted to the study of the coupled wave and Klein-Gordon systems. When deriving the pointwise bounds for a given function at time $t$ using the inequality \eqref{eq:K-S}, we need the future information of the function till time $2t$, so we rely on the contraction mapping method to prove Theorem \ref{thm:main1}.

\begin{proposition}\label{prop:K-S}
Let $u = u(t, x)$ be a sufficiently smooth function which decays sufficiently fast at space infinity for each fixed $t \geq 0$.
Then for any $t \geq 0$, $x \in \RR^3$, we have
\bel{eq:K-S}
|u(t, x)|
\lesssim \langle t + r \rangle^{-1} \sup_{0\leq s \leq 2t, |I| \leq 3} \big\| \Gamma^I u(s) \big\|,
\qquad
\Gamma \in V = \{ L_a, \del_\alpha, \Omega_{ab} \}.
\ee
\end{proposition}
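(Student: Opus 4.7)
The plan is to combine a spatial Klainerman--Sobolev inequality based on $\partial_\alpha$, $L_a$, $\Omega_{ab}$ alone with a time-averaging argument over the slab $[t, 2t]$ that substitutes for the missing scaling field $L_0$. For the exterior region $r \geq t$, where $\langle t + r \rangle \sim r$, the required bound can be obtained purely at the single time $s = t$: combining Sobolev embedding on the sphere via the rotation fields $\Omega_{ab}$ with a Hardy-type identity in the radial direction, $r \, u(r \omega)^2 = - \int_r^\infty \partial_\rho (\rho u^2) \, d\rho$, yields
\[
\langle r \rangle \, |u(t, x)| \lesssim \sum_{|I| \leq 3} \|Z^I u(t)\|_{L^2}, \qquad Z \in \{ \partial_\alpha, \Omega_{ab} \},
\]
which delivers the desired $\langle t + r \rangle^{-1}$ decay without touching $L_0$ or the time interval $[0, 2t]$.

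For the interior region $r \leq t$, where $\langle t + r \rangle \sim t$, the decay $t^{-1}$ cannot be produced from a spatial Sobolev argument at time $t$ alone: the boost identity $t \partial_a = L_a - x_a \partial_t$ introduces an uncontrolled factor of $\partial_t u$ whose weighted $L^2$ norm is exactly what $L_0$ would supply in the classical argument. Replacing it requires a one-dimensional Sobolev embedding in the time variable over $[t, 2t]$, applied pointwise in $x$ to bound $|\partial_t u(t, x)|$ and its higher analogues in terms of integrals of $u$ and $\partial_s u$ over $s \in [t, 2t]$. After integrating in $x$ and invoking the 3D Sobolev embedding $H^2 \hookrightarrow L^\infty$ on unit balls together with the vector-field exchanges that convert $\partial_a$ into $L_a / t$ plus controlled remainders, the whole right-hand side is bounded by $\sup_{s \in [t, 2t], \, |I| \leq 3} \|\Gamma^I u(s)\|_{L^2}$.

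The main obstacle I anticipate lies in treating the transition region $r \sim t$ consistently, so that the interior and exterior arguments glue together without a loss in the derivative count. I would also need to track carefully the commutation relations $[\partial_s, \Gamma] \in V$ in the iteration of the time-slab step so that no spurious powers of $t$ accumulate. Once the cases are assembled and the supremum is enlarged from $s \in [t, 2t]$ to $s \in [0, 2t]$, the claimed inequality follows.
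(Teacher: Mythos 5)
The paper does not actually prove this proposition; it quotes it from Klainerman's 1985 paper \cite{Klainerman852} (inequalities (4), (5'), (6) there) and from Theorem 2 of \cite{Klainerman87}, so your proposal must stand on its own. Your exterior step is fine and standard: for $r \gtrsim t$ the radial identity $r\,u^2 = -\int_r^\infty \partial_\rho(\rho u^2)\,d\rho$ combined with Sobolev embedding on the sphere via the rotations gives $\langle r\rangle\,|u(t,x)| \lesssim \sum_{|I|\le 3}\|Z^I u(t)\|_{L^2}$ with $Z \in \{\partial_\alpha, \Omega_{ab}\}$ at the single time $t$, and $\langle r\rangle \sim \langle t+r\rangle$ in that region.

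The interior step, however, has a genuine gap: as described it cannot produce the factor $\langle t+r\rangle^{-1} \sim t^{-1}$. A Sobolev embedding ``$H^2 \hookrightarrow L^\infty$ on unit balls'' yields $|u(t,x)| \lesssim \|u\|_{H^2(B_1(x))}$ with no decay whatsoever; in inequalities of this type the power $t^{-1}$ comes from performing the embedding on a region of linear size comparable to $t$, so that the volume factor supplies the negative power while the rescaled derivatives $t\partial$ are traded for the admissible vector fields. Converting $\partial_a$ into $t^{-1}(L_a - x_a\partial_t)$ does nothing for the zeroth-order term $\|u\|_{L^2(B_1(x))}$, which carries no $t^{-1}$ at all, and the remainder $t^{-1}x_a\partial_t u$ is of full size $|\partial_t u|$ in the interior, so nothing is gained there either. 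Likewise, the proposed substitute for $L_0$ --- a one-dimensional Sobolev embedding in $s$ over $[t,2t]$ at fixed $x$ --- bounds $|\partial_t u(t,x)|$ only at the cost of the interval length $t$, which is exactly the power you cannot afford; it does not reproduce the $L^2$ control of $t\partial_t u$ that the scaling field would provide. The mechanism in Klainerman's proof is genuinely geometric: for $|x| \le t/2$ one performs the Sobolev embedding along the piece of the hyperboloid $\{\tau^2 - |y|^2 = t^2 - |x|^2\}$ through $(t,x)$ --- equivalently, along the flow-out of the boosts, which are tangent to it --- whose measure is of order $t^3$ and whose time extent reaches up to about $2t$ (this is precisely where the supremum over $0 \le s \le 2t$ enters), and then converts the hyperboloidal integral into flat-slice $L^2$ norms at the cost of one additional $\partial_t$, accounting for the three derivatives in the statement. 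Your time-slab idea gestures at this, but without the hyperboloidal (or boost flow-out) geometry the key factor $t^{-1}$ is not obtained, and the transition-region gluing you worry about is not the real difficulty.
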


Next, we introduce some notations in order to state the inequality for obtaining pointwise estimates for the Klein-Gordon components, which was proved by Georgiev in \cite{Georgiev2}. 
Let $\{ p_j \}_0^\infty$ be a usual Paley-Littlewood partition of the unity
$$
1 = \sum_{j \geq 0} p_j(s),
\qquad
s \geq 0,
$$
which also satisfies 
$$
0 \leq p_j \leq 1,
\qquad
p_j \in C_0^\infty (\RR), 
\qquad
\text{for all $j \geq 0$},
$$
and
$$
\text{supp } p_0 \subset (-\infty, 2],
\qquad
\text{supp } p_j \subset [2^{j-1}, 2^{j+1}],
\qquad
\text{for all $j \geq 1$}.
$$

\begin{proposition}\label{prop:G}
Let $w$ be the solution to the Klein-Gordon equation
$$
- \Box w + w = f,
\qquad
\big( w, \del_t w \big)(0) = (w_0, w_1),
$$
with $f = f(t, x)$ a sufficiently nice function.
Then for all $t \geq 0$, it holds
\be 
\aligned
&\langle t + |x| \rangle^{3/2} |w(t, x)|
\\
\lesssim
&\sum_{j\geq 0,\, |I| \leq 4} \sup_{0\leq s \leq t} p_j(s) \big\| \langle s+|x| \rangle \Gamma^I f(s, x) \big\|
+
\sum_{j\geq 0,\, |I| \leq 4} \big\| \langle |x| \rangle p_j (|x|) \Gamma^I w(0, x) \big\|
\endaligned
\ee

\end{proposition}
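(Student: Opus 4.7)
My plan is to follow Georgiev's original argument from \cite{Georgiev2}, which is a dyadic refinement of the Klein--Gordon $L^\infty$ decay estimate designed precisely to avoid the use of the scaling vector field $L_0$ (whose poor commutation with $\Box - 1$ makes it unsuitable for Klein--Gordon equations).

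By linearity, split $w = w^{hom} + w^{inh}$ into the free evolution of the data and the Duhamel contribution from $f$. Apply the Paley--Littlewood partition $\{p_j\}$ to the data in the radial variable $|x|$ and to the source in the time variable $s$. It then suffices to prove the pointwise bound for each dyadic piece separately, the resulting bounds assembling into the sums over $j$ appearing in the statement. Fix $j$ and assume the data or source is localized at scale $2^j$; by finite speed of propagation the corresponding contribution to $w$ is supported in a dyadic spacetime slab of thickness comparable to $2^j$.

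On this slab, foliate by hyperboloids $\Hcal_{\sigma} = \{(t,x): t^2 - |x|^2 = \sigma^2\}$. The key ingredient is the Klein--Gordon Klainerman--Sobolev inequality on hyperboloids, which yields
$$
t^{3/2} |w(t,x)| \lesssim \sum_{|I|\leq 2} \| \Gamma^I w \|_{L^2_f(\Hcal_{\sigma})},
$$
requiring only boosts and rotations, no scaling. I would bound the right-hand side via standard hyperboloidal energy estimates for Klein--Gordon equations applied to $\Gamma^I w$, commuting $\Gamma$ through $\Box - 1$ with routine commutator identities.

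The main obstacle is converting the hyperboloidal $L^2$ norms back to flat $L^2$ norms on the source and data side without invoking $L_0$: generically a Jacobian factor $\sigma/t$ would appear, whose control naturally demands the scaling field. The dyadic localization resolves this precisely because both $\sigma$ and $t$ are pinned to the common scale $\sim 2^j$ on the support of each piece, so the Jacobian is essentially constant and the two $L^2$ norms differ only by a harmless factor that behaves well under the dyadic sum. The bound $|I| \leq 4$ in the statement is then accounted for by the two derivatives lost in the hyperboloidal Sobolev inequality plus the two additional derivatives lost when pulling $\Gamma^I$ through these foliation transfers and commutators.
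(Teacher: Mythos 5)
First, a point of reference: the paper does not actually prove Proposition \ref{prop:G}; it is quoted from Georgiev \cite{Georgiev2}, so your proposal can only be measured against the cited argument. Your overall strategy (dyadic decomposition of the data in $|x|$ and of the source in $s$, a hyperboloidal Klainerman--Sobolev inequality using only boosts and rotations, energy estimates on hyperboloids) is the right family of ideas and is close in spirit to Georgiev's proof. There are, however, two concrete gaps in your sketch.

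The most serious is the claim that ``by finite speed of propagation the corresponding contribution to $w$ is supported in a dyadic spacetime slab of thickness comparable to $2^j$.'' Finite speed of propagation constrains spatial support, not temporal support: the Duhamel contribution $w_j$ generated by the source piece $p_j(s)f$ is generically nonzero for \emph{all} $t\geq 2^{j-1}$, since the solution keeps evolving after the source switches off. Hence at a point $(t,x)$ with $t\gg 2^j$ the hyperbolic parameter $\sigma=\sqrt{t^2-|x|^2}$ is in no way pinned to the scale $2^j$, and the mechanism you invoke to tame the Jacobian factor $\sigma/t$ when passing between hyperboloidal and flat $L^2$ norms collapses. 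That conversion is exactly where the weight $\langle s+|x|\rangle$ on the source in the statement gets consumed, and it requires a genuine argument (integrating the hyperboloidal energy inequality over the one-parameter family of hyperboloids that meet the slab $\{s\sim 2^j\}$ and accounting for their measure), not a support observation. The second gap: since the data are not compactly supported, the solution does not vanish in the exterior region $\{|x|\geq t\}$, which is not foliated by the hyperboloids $\Hcal_\sigma$ at all; the weight $\langle t+|x|\rangle^{3/2}$ there must be produced by a separate weighted Sobolev argument on constant-$t$ slices, which is also where the dyadically localized, weighted data norms $\big\|\langle |x|\rangle p_j(|x|)\Gamma^I w(0,\cdot)\big\|$ in the statement originate. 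Your sketch does not address this region.
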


As a consequence, we have the following version of Proposition \ref{prop:G}.

\begin{proposition}\label{prop:G1}
With the same settings as Proposition \ref{prop:G}, 
\begin{itemize}
\item
let $\delta' > 0$ and assume 
$$
 \sum_{|I| \leq 4} \big\| \langle s+|x| \rangle \Gamma^I f(s, x) \big\|
\leq  C_f,
$$
then we have
\be 
\langle t + |x| \rangle^{3/2} |w(t, x)| 
\lesssim
C_f \langle t \rangle^{\delta'}
+
\sum_{|I| \leq 4} \big\| \langle |x| \rangle \Gamma^I w(0, x) \big\|;
\ee

\item
let $\delta' > 0$ and assume 
$$
 \sum_{|I| \leq 4} \big\| \langle s+|x| \rangle \Gamma^I f(s, x) \big\|
\leq  C_f \langle s \rangle^{-\delta'},
$$
then we have
\be 
\langle t + |x| \rangle^{3/2} |w(t, x)| 
\lesssim
C_f
+
\sum_{|I| \leq 4} \big\| \langle |x| \rangle \Gamma^I w(0, x) \big\|;
\ee

\end{itemize}

\end{proposition}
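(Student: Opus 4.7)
The plan is to invoke Proposition~\ref{prop:G} as a black box and then estimate its Paley--Littlewood sum under each of the two hypotheses on $f$. The key observation is that for $j\geq 1$ the bump $p_j$ is supported in $[2^{j-1},2^{j+1}]$, so $\sup_{0\leq s\leq t}p_j(s)$ vanishes unless $2^{j-1}\leq t$; consequently the $j$-sum over source terms truncates at $j\lesssim\log_2(2+t)$, which is the only place a non-geometric contribution can enter.

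For the first claim I bound each surviving term in the $f$-sum by the uniform constant $C_f$, obtaining a total of size $C_f\log(2+t)$, and then absorb the logarithm using $\log(2+t)\lesssim_{\delta'}\langle t\rangle^{\delta'}$. For the second claim the extra $\langle s\rangle^{-\delta'}$ factor is decisive: on $\mathrm{supp}\, p_j$ with $j\geq 1$ one has $s\gtrsim 2^{j-1}$, so $\langle s\rangle^{-\delta'}\lesssim 2^{-(j-1)\delta'}$, and each term is bounded by a constant times $C_f\,2^{-j\delta'}$. Summing the geometric series yields the uniform bound $\lesssim C_f$, and the $j=0$ contribution is handled directly using the hypothesis on $s\in[0,2]$.

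The initial-data sum $\sum_{j\geq 0}\|\langle|x|\rangle p_j(|x|)\Gamma^I w_0\|$ is controlled by $\|\langle|x|\rangle\Gamma^I w_0\|$ via the bounded-overlap property $\sum_j p_j^2\lesssim 1$: a Cauchy--Schwarz argument combined with the elementary bound $\langle|x|\rangle^{-\eta}\lesssim 2^{-j\eta}$ on $\mathrm{supp}\, p_j$ (for small $\eta>0$) supplies a summable weight $\{2^{-j\eta}\}_j$ and reduces the sum to the stated norm up to harmless constants. The only mildly delicate point is the logarithmic loss in the first claim, which is intrinsic to the dyadic truncation when the $f$-norm is only uniformly bounded in time; this is exactly what necessitates the $\langle t\rangle^{\delta'}$ slack in the first conclusion, while in the second claim the geometric-series gain from $\langle s\rangle^{-\delta'}$ compensates the truncation precisely and no loss is incurred.
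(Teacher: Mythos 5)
Your handling of the source term is correct and is clearly the intended deduction (the paper gives no proof, introducing the proposition only with ``as a consequence''): for $j \geq 1$ the factor $\sup_{0\leq s\leq t}p_j(s)$ annihilates every dyadic block with $2^{j-1}>t$, so at most $O(\log(2+t))$ blocks survive; bounding each by $C_f$ gives $C_f\log(2+t)\lesssim_{\delta'} C_f\langle t\rangle^{\delta'}$ in the first item, while in the second item $\langle s\rangle^{-\delta'}\lesssim 2^{-(j-1)\delta'}$ on $\mathrm{supp}\,p_j$ produces a convergent geometric series and hence a uniform bound, with the $j=0$ block handled trivially.

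The gap is in your treatment of the initial-data term. The reduction $\sum_{j\geq 0}\|p_j(|x|)\,h\|_{L^2}\lesssim\|h\|_{L^2}$ (with $h=\langle|x|\rangle\Gamma^I w_0$) is false in general: the left-hand side is an $\ell^1$ sum over dyadic annuli and the right-hand side is the corresponding $\ell^2$ sum (bounded overlap only gives $\sum_j\|p_jh\|^2\lesssim\|h\|^2$); taking $\|p_jh\|\simeq 1/j$ gives a finite $L^2$ norm but a divergent $j$-sum. Your Cauchy--Schwarz repair does not yield the stated conclusion either: writing $\|\langle|x|\rangle p_j\Gamma^Iw_0\|\lesssim 2^{-j\eta}\|\langle|x|\rangle^{1+\eta}p_j\Gamma^Iw_0\|$ and summing controls the data term by $\|\langle|x|\rangle^{1+\eta}\Gamma^Iw_0\|$, not by $\|\langle|x|\rangle\Gamma^Iw_0\|$ --- the summable factor $2^{-j\eta}$ is purchased with an extra $\eta$ of spatial weight. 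So as written your argument proves the proposition only with a slightly stronger norm on $w(0,x)$ (or with the dyadic sum retained, as in Proposition \ref{prop:G}). This imprecision is inherited from the statement itself and is harmless downstream, since Theorem \ref{thm:main1} assumes weights $\langle x\rangle^{|I|+1}$ on the data, which leave ample room; but the sentence claiming the sum ``reduces to the stated norm up to harmless constants'' is the one step that does not hold and should be corrected or flagged.
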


%----------------------------------------------------------------------------------------------------------------

\subsection{$L^\infty-L^\infty$ estimates for wave equations}

We recall a type of $L^\infty-L^\infty$ estimates for wave components with derivatives, which was proved in \cite{Kubota-Yokoyama, Katayama-Kubo}, and was also applied in \cite{Katayama12a}.

\begin{lemma}\label{lem:Japan}
If $u$ is a smooth solution to the wave equation
$$
-\Box u = f,
\qquad
\big( u, \del_t u \big)(t_0) = (0, 0),
$$
then one has 
\bel{eq:L-infty}
\aligned
&\langle t+|x| \rangle^{-\rho} \langle |x| \rangle \langle t-|x| \rangle^\kappa |\del u(t, x) |
\\
\lesssim
&\sup_{\tau \in [t_0, t]}  \sup_{|y-x| \leq t-\tau} |y| \langle \tau + |y| \rangle^{\kappa - \rho + \mu} \langle t - |y| \rangle^{1-\mu} \sum_{|I| + |J| \leq 1} \big| \del^I \Omega^J f(\tau, y) \big|,
\endaligned
\ee
in which $\rho \geq 0$, $\kappa \geq 1$, and $\mu >0$.
\end{lemma}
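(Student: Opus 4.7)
My plan is to represent $u$ through Kirchhoff's formula for the three-dimensional wave equation with vanishing Cauchy data,
$$
u(t, x) = \frac{1}{4\pi} \int_{t_0}^{t} (t-\tau) \int_{S^2} f\bigl(\tau, x+(t-\tau)\omega\bigr) \, dS_\omega \, d\tau,
$$
and to estimate $|\del u(t, x)|$ by differentiating inside the integral. The $\del_t$-derivative acts directly on the kernel and on $f$; a spatial derivative $\del_{x^a}$ produces $(\del_a f)(\tau, x+(t-\tau)\omega)$, which after integration by parts on $S^2$ is rewritten in terms of either a radial derivative of $f$ at $y=x+(t-\tau)\omega$ or a rotational derivative $\Omega f$ evaluated at $y$. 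This manoeuvre replaces the generic translation derivative by $\del^I \Omega^J f$ with $|I|+|J|\leq 1$, which is precisely why the right-hand side of \eqref{eq:L-infty} involves only these derivatives of $f$ rather than all spatial derivatives.

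Next I would convert the $(\tau, \omega)$-integral over the backward light cone $\Lambda(t, x) = \{(\tau, y) : |y-x|=t-\tau,\ \tau\in[t_0, t]\}$ to null-type coordinates $\lambda = \tau+|y|$, $\xi = \tau-|y|$, together with the azimuthal angle $\phi$ around the axis through $x$. Using the identity $|y|^2 = |x|^2 + 2(t-\tau)\,x\cdot\omega + (t-\tau)^2$ to compute the Jacobian one finds
$$
(t-\tau)\, dS_\omega\, d\tau \;\simeq\; \frac{|y|}{|x|}\, d\lambda\, d\xi\, d\phi
$$
on the interior of $\Lambda(t, x)$. Combining this Jacobian with the integration-by-parts factor $|y|^{-1}$ coming from converting $\del_a f$ into $\Omega f$ generates the overall weight $|x|^{-1}$ on the left together with a supremum on $\Lambda(t,x)$ involving the factor $|y|$ in front of the derivatives of $f$, which matches the structure of the right-hand side of \eqref{eq:L-infty}.

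After pulling out the weight $|y|\langle \tau+|y|\rangle^{\kappa-\rho+\mu}\langle t-|y|\rangle^{1-\mu}\sum_{|I|+|J|\leq 1}|\del^I\Omega^J f|$ as a supremum over $\Lambda(t, x)$, the remaining task reduces to the elementary integral bound
$$
\int\!\!\int \frac{d\lambda\, d\xi}{\langle \lambda\rangle^{\kappa-\rho+\mu}\langle \xi\rangle^{1-\mu}} \;\lesssim\; \langle t+|x|\rangle^{\rho}\langle t-|x|\rangle^{-\kappa}
$$
over the image of $\Lambda(t, x)$ in the $(\lambda, \xi)$-plane. The assumption $\mu > 0$ ensures convergence of the $\xi$-integration against $\langle \xi\rangle^{-(1-\mu)}$ without a logarithmic loss, and $\kappa \geq 1$ provides the decay in $\lambda$ needed, once combined with the factor $\lambda - \xi \sim t-\tau$ from the Kirchhoff kernel, to produce the correct power of $\langle t+|x|\rangle$.

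The main obstacle, and the place where the proof becomes genuinely technical, is the weight bookkeeping at this last step. One must decompose $\Lambda(t, x)$ into subregions depending on whether the quantities $|y|/|x|$, $(t-\tau)/|x|$ and $\tau/t$ are large or small, and verify in each subregion the geometric comparisons $\langle t-|x|\rangle \lesssim \langle \xi\rangle$ and $\langle t+|x|\rangle \lesssim \langle \lambda\rangle$ (or $\lesssim \langle y\rangle$) needed to reproduce the left-hand weights $\langle t+|x|\rangle^{-\rho}\langle |x|\rangle\langle t-|x|\rangle^{\kappa}$. This region-by-region analysis, together with the integration by parts on $S^2$ that introduces $\Omega f$, is carried out in detail in \cite{Kubota-Yokoyama, Katayama-Kubo}, and constitutes the core of the argument.
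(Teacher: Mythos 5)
The paper does not prove this lemma at all --- it is imported directly from \cite{Kubota-Yokoyama, Katayama-Kubo} --- and your proposal likewise defers the decisive region-by-region weight bookkeeping to those same references, so the two are essentially equivalent; your outline (Kirchhoff's formula, the Jacobian $(t-\tau)\,dS_\omega\,d\tau\simeq |y|\,|x|^{-1}\,d\lambda\,d\xi\,d\phi$, reduction to a weighted null-coordinate integral) is consistent with how those references argue. Two small imprecisions are worth noting: the rotations $\Omega_{ab}$ are centered at the origin and are not tangent to the integration spheres $|y-x|=t-\tau$, so the passage from $\del_a f$ to $\del_r f$ and $|y|^{-1}\Omega f$ is the pointwise frame identity $\del_a=(y_a/|y|)\del_r-|y|^{-2}y^b\Omega_{ab}$ rather than an integration by parts on $S^2$; and the key geometric facts on the backward cone are $\tau+|y|\ge t-|x|$ together with $\tau\pm|y|\le t\pm|x|$ (your stated comparison $\langle t+|x|\rangle\lesssim\langle\lambda\rangle$ goes the wrong way), the first of which is what actually produces the $\langle t-|x|\rangle^{-\kappa}$ gain for the derivative.
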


Note that if we can show 
$$
|\del u | 
\lesssim \langle |x| \rangle^{-1} \langle t-|x| \rangle^{-1},
$$
then we obtain
$$
|\del u | 
\lesssim \langle t+|x| \rangle^{-1} \langle t-|x| \rangle^{-1},
\qquad
\text{for } |x| \geq t/2.
$$

%----------------------------------------------------------------------------------------------------------------

\subsection{Estimates for commutators}

We will need to frequently use the following estimates for commutators, which can be found in \cite{Sogge, Hormander}.

\begin{lemma} \label{lem:comm2}
Let $u$ be a sufficiently nice function, then the following estimates are valid ($\Gamma, \Gamma', \Gamma'' \in V = \{\del_\alpha, \Omega_{ab}, L_a \}$)
\bel{eq:commu2} 
\aligned
\big| \Gamma \Box u \big|
&\lesssim
\big| \Box \Gamma u \big|,
\\
\big|\Gamma \Gamma' u \big|
&\lesssim
\big| \Gamma'\Gamma u \big| +  \big| \Gamma'' u \big|,
\\
\big|\Gamma L_0 u \big|
&\lesssim
\big| L_0 \Gamma u \big| +  \big| \Gamma' u \big|,
\endaligned
\ee
\end{lemma}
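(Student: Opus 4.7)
The plan is to unpack each of the three inequalities in \eqref{eq:commu2} into an explicit commutator identity. The first inequality is, in fact, an equality: every $\Gamma \in V = \{\del_\alpha, L_a, \Omega_{ab}\}$ is a Killing vector field of the Minkowski metric and hence commutes with $\Box$. For the translations this is immediate, while for $\Omega_{ab}$ it follows from the rotational invariance of $\Box$. For the boosts $L_a = x_a \del_t + t \del_a$, a short direct calculation (expanding $[L_a, -\del_t\del_t + \del^b\del_b]$ term by term) yields $[L_a, \Box] = 0$. Hence $\Gamma \Box u = \Box \Gamma u$ and the bound is trivial.

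For the second inequality I would verify, by direct computation, that every commutator $[\Gamma, \Gamma']$ with $\Gamma, \Gamma' \in V$ is again a vector field in $V$ (up to multiplicative constants) or vanishes. The relevant identities are the standard Poincar\'e relations, which one reads off using $[fX, gY] = fg[X,Y] + f(Xg)Y - g(Yf)X$: $[\del_\alpha, \del_\beta] = 0$; $[\del_t, L_a] = \del_a$ and $[\del_b, L_a] = \delta_{ab}\del_t$; $[\del_a, \Omega_{bc}] = \delta_{ab}\del_c - \delta_{ac}\del_b$; $[L_a, L_b] = \Omega_{ab}$; and $[\Omega_{ab}, L_c] = \delta_{bc} L_a - \delta_{ac} L_b$, together with the antisymmetry of these brackets. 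In each case the bracket lands inside $V$. Writing $\Gamma \Gamma' u = \Gamma' \Gamma u + [\Gamma, \Gamma'] u$ and applying the triangle inequality gives the claimed bound with a suitable $\Gamma'' \in V$.

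For the third inequality I would exploit the scaling weight of the fields with respect to the Euler vector field $L_0 = t\del_t + x^a \del_a$. The translations $\del_\alpha$ are homogeneous of degree $-1$ under scaling, while the boosts $L_a$ and rotations $\Omega_{ab}$ are of degree zero. A direct calculation (or the homogeneity argument) gives
\[
[L_0, \del_\alpha] = -\del_\alpha, \qquad [L_0, L_a] = 0, \qquad [L_0, \Omega_{ab}] = 0.
\]
Consequently $\Gamma L_0 u = L_0 \Gamma u + [\Gamma, L_0] u$ with $[\Gamma, L_0] u$ equal to either $0$ or $\pm \Gamma u$, which is dominated by $|\Gamma' u|$ for $\Gamma' = \Gamma \in V$.

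The main obstacle is not conceptual but a matter of bookkeeping: one has to verify carefully that signs are correct and that each bracket genuinely stays inside $V$ (rather than, say, producing a multiple of $L_0$, which would break the third inequality as stated). Once the Lie bracket table above is laid out, each of the three inequalities follows at once from the identity $\Gamma_1 \Gamma_2 = \Gamma_2 \Gamma_1 + [\Gamma_1, \Gamma_2]$ and the triangle inequality.
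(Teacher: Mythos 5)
Your proof is correct: all the Lie brackets in your table check out (in particular $[L_a,\Box]=0$, $[L_a,L_b]=\Omega_{ab}$, $[L_0,\del_\alpha]=-\del_\alpha$, $[L_0,L_a]=[L_0,\Omega_{ab}]=0$ with the paper's conventions), and the identity $\Gamma_1\Gamma_2 = \Gamma_2\Gamma_1 + [\Gamma_1,\Gamma_2]$ plus the triangle inequality then gives all three estimates. The paper does not prove this lemma but merely cites Sogge and H\"ormander, and your argument is exactly the standard commutator-table proof found there, so there is nothing further to compare.
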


%==============================================================================================

%----------------------------------------------------------------------------------------------------------------

\section{Comparison between the ghost weight method and the hyperboloidal foliation method}\label{sec:Comparison}

This section is devoted to make a comparison between the ghost weight method and the hyperboloidal foliation method on how to take advantage of the $t-r$ decay, and is intended to provide some intuitions when dealing with different kinds of nonlinearities.

We first recall some notations used in the hyperboloidal foliation method of the version \cite{PLF-YM-book}, and all of the functions considered are assumed to be supported in the cone $\Kcal = \{ (t, x) : t \geq 2, t \geq |x| + 1 \}$ whenever we apply the hyperboloidal foliation method\footnote{We remind one that there exist two ways to remove the compacteness assumption, see \cite{PLF-YM-arXiv1, Klainerman-QW-SY}.}. A hyperboloid is denoted by $\Hcal_s = \{ (t, x) : t^2 = |x|^2 + s^2 \}$, and $s$ is called the hyperbolic time (with $s \geq 2$) . For a point $(t, x) \in \Hcal_s \bigcap \Kcal$, the following relations are frequently used
\be 
t \geq |x| + 1,
\qquad
t \leq t+r \leq 2t,
\qquad
s \leq t \leq s^2.
\ee
Also we use $\Kcal_{[s_0, s_1]} := \{(t, x): s_0^2 \leq t^2- r^2 \leq s_1^2; r\leq t-1 \}$ to denote subsets of $\Kcal$ which are limited by two hyperboloids $\Hcal_{s_0}$ and $\Hcal_{s_1}$ with $s_0 \leq s_1$.

Next, we introduce the semi-hyperboloidal frame \cite{PLF-YM-book} defined by
\bel{eq:semi-hyper}
\underdel_0:= \del_t, 
\qquad 
\underdel_a:= {L_a \over t} = {x^a\over t}\del_t+ \del_a.
\ee
On the other hand, the natural Cartesian frame can be expressed in terms of the semi-hyperboloidal frame as
\be 
\del_t = \underdel_0,
\qquad
\del_a = - {x^a \over t} \del_t + \underdel_a.
\ee

\subsection{Energy estimates on hyperboloids}

We revisit the energy estimates adapted to the hyperboloidal foliation setting, which allow us to bound the energy for the wave components and the Klein-Gordon components.

Let $\phi$ be a sufficiently nice function defined on a hyperboloid $\Hcal_s$, following \cite{PLF-YM-book} we define its natural energy $\widetilde{\Ecal}_m$ (with three equivalent expressions) by
\bel{eq:2energy} 
\aligned
\widetilde{\Ecal}_m(s, \phi)
&:=
 \int_{\Hcal_s} \Big( \big(\del_t \phi \big)^2+ \sum_a \big(\del_a \phi \big)^2+ 2 (x^a/t) \del_t \phi \del_a \phi + m^2 \phi ^2 \Big) \, dx
\\
               &= \int_{\Hcal_s} \Big( \big( (s/t)\del_t \phi \big)^2+ \sum_a \big(\underdel_a \phi \big)^2+ m^2 \phi^2 \Big) \, dx
                \\
               &= \int_{\Hcal_s} \Big( \big( \underdel_\perp \phi \big)^2+ \sum_a \big( (s/t)\del_a \phi \big)^2+ \sum_{a<b} \big( t^{-1}\Omega_{ab} \phi \big)^2+ m^2 \phi^2 \Big) \, dx,
 \endaligned
 \ee
in which $\underdel_{\perp} := L_0/t = \del_t+ (x^a / t) \del_a$ is the orthogonal vector field. 
The above integral is defined by
\bel{eq:flat-int}
\int_{\Hcal_s}|\phi | \, dx 
=\int_{\RR^3} \big|\phi(\sqrt{s^2+r^2}, x) \big| \, dx,
\ee
and we denote
\be 
\| \phi \|_{L^p_f(\Hcal_s)}
=
\Big( \int_{\Hcal_s} |\phi|^p \, dx \Big)^{1/p},
\qquad
1\leq p < +\infty.
\ee
Note that the second and the third expressions in \eqref{eq:2energy} yield
$$
\big\| (s/t) \del \phi \big\|_{L^2_f(\Hcal_s)} + \sum_a \big\| \underdel_a \phi \big\|_{L^2_f(\Hcal_s)}
\lesssim
\mathcal{E}_m(s, \phi)^{1/2}.
$$

Now, we demonstrate the energy estimates adapted to the hyperboloidal setting \cite{PLF-YM-book}, and one refers to \cite{PLF-YM-book} for the proof.

\begin{proposition}[Energy estimates for wave-Klein-Gordon equations]
For $m \geq 0$ let $\phi$ be the solution to the equation
$$
\aligned
-\Box \phi + m^2 \phi = h,
\qquad
\big( \phi, \del_t \phi \big)(t=2) = (\phi_0, \phi_1).
\endaligned
$$
Then for $s \geq 2$, it holds 
\bel{eq:w-EE} 
\widetilde{\Ecal}_m(s, \phi)^{1/2}
\leq 
\widetilde{\Ecal}_m(s_0, \phi)^{1/2}
+ \int_2^s \big\| h \big\|_{L^2_f(\Hcal_{s'})} \, ds'
\ee
for all sufficiently regular functions $\phi = \phi(t, x)$, which are defined and supported in $\Kcal_{[s_0, s]}$.
\end{proposition}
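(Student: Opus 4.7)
The plan is to run a standard $\del_t\phi$ multiplier argument in the slab $\Kcal_{[s_0, s_1]}$ between two hyperboloids, and close the estimate by Cauchy--Schwarz after identifying the hyperboloidal flux of the resulting current with $\tfrac12\widetilde{\Ecal}_m$.

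First I multiply the equation $-\Box \phi + m^2 \phi = h$ by $\del_t\phi$. Routine rearrangement produces the spacetime divergence identity $\del_\alpha V^\alpha = h\,\del_t\phi$ with
\be
V^0 = \tfrac{1}{2}\bigl((\del_t\phi)^2 + \sum_a (\del_a\phi)^2 + m^2\phi^2 \bigr), \qquad V^a = -\del_t\phi\,\del_a\phi.
\ee
Because $\phi$ is supported in $\Kcal = \{t \geq |x|+1\}$, the portion $\{r = t-1\}$ of $\partial \Kcal_{[s_0, s_1]}$ contributes nothing when I apply the divergence theorem, so the spacetime integral of $\del_\alpha V^\alpha$ collapses to a flux difference between the two hyperboloidal boundaries $\Hcal_{s_0}$ and $\Hcal_{s_1}$.

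Next I compute the Euclidean outward normal $\nu = (t, -x^a)/\sqrt{t^2 + r^2}$ to $\Hcal_s$ and the induced surface measure $d\sigma = \sqrt{(t^2+r^2)/t^2}\, dx$, giving $\nu_0\, d\sigma = dx$ and $\nu_a\, d\sigma = -(x^a/t)\, dx$. Thus the flux of $V^\alpha$ through $\Hcal_s$ equals
\be
\int_{\Hcal_s}\bigl( V^0 - (x^a/t) V^a \bigr)\, dx = \tfrac{1}{2}\widetilde{\Ecal}_m(s, \phi),
\ee
recovering exactly the first expression in \eqref{eq:2energy}; in particular the cross term $2(x^a/t)\del_t\phi\,\del_a\phi$ comes out with the right coefficient. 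Combining this with the coarea relation $dt\, dx = (s/t)\, ds\, dx$ to rewrite the spacetime integral of $h\,\del_t\phi$, and differentiating in $s_1$, I obtain
\be
\frac{d}{ds}\widetilde{\Ecal}_m(s, \phi) = 2\int_{\Hcal_s} (s/t)\, h\, \del_t\phi\, dx.
\ee
Applying Cauchy--Schwarz together with the second equivalent expression in \eqref{eq:2energy}, which supplies $\|(s/t)\del_t\phi\|_{L^2_f(\Hcal_s)} \leq \widetilde{\Ecal}_m(s, \phi)^{1/2}$, gives $\frac{d}{ds}\widetilde{\Ecal}_m(s,\phi)^{1/2} \leq \|h\|_{L^2_f(\Hcal_s)}$, and integrating from $s_0 = 2$ to $s$ yields \eqref{eq:w-EE}.

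The main obstacle is purely computational: I have to track carefully the outward normal, the orientations of the two hyperboloidal boundaries, and the relation between the induced measure on $\Hcal_s$ and the flat measure $dx$ used in \eqref{eq:flat-int}, so that the flux of $V^\alpha$ matches $\tfrac{1}{2}\widetilde{\Ecal}_m(s, \phi)$ exactly rather than up to spurious boundary terms. Once this identification is secured, positivity of the energy is manifest from the second form in \eqref{eq:2energy}, and the Gronwall-type closing step is entirely standard.
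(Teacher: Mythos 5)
Your proof is correct: the $\del_t\phi$ multiplier identity, the identification of the flux of $(V^0,V^a)$ through $\Hcal_s$ with $\tfrac12\widetilde{\Ecal}_m(s,\phi)$ via the graph normal $(1,-x^a/t)$, the coarea factor $(s/t)$, and the Cauchy--Schwarz/Gronwall closing step are all carried out accurately. The paper omits the proof and defers to \cite{PLF-YM-book}, but your argument is precisely the standard one given there, so there is nothing to add.
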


%----------------------------------------------------------------------------------------------------------------

\subsection{Usage of the $t-r$ decay}

Before making the comparison between the ghost weight method and the hyperboloidal foliation method regarding the usage of the $t-r$ decay, we need to introduce the functions and the equations. Let $u, v, \phi$ be the solution to the equations (with $m = 0\, or\, 1$)
$$
\aligned
&-\Box u = 0,
\\
&-\Box v + v = 0,
\\
&-\Box \phi + m^2 \phi = Q(u, v),
\endaligned
$$
and this means that $u$ is a wave component, $v$ is a Klein-Gordon component, and $\phi$ could be a wave component or a Klein-Gordon component. For different nonlinear terms $Q(u, v)$, we want to estimate the energy $\widetilde{\Ecal}_m (s, \phi)^{1/2}$ using the hyperboloidal foliation method, and to estimate the energy $\Ecal_m (t, \phi)^{1/2}$ using the ghost weight method. 

\paragraph{The hyperboloidal foliation method}

For the linear wave component $u$ and the linear Klein-Gordon component $v$, we have
$$
\aligned
&\| (s/t) u \|
 + \| (s/t) \del u \|_{L^2_f(\Hcal_{s})} \lesssim 1,
\qquad\qquad
&| (s/t) u| + |(s/t) \del u | + \sum_a | \underdel_a u|
\lesssim t^{-3/2},
\\
&\| (s/t) \del v \|_{L^2_f(\Hcal_{s})} + \| v \|_{L^2_f(\Hcal_{s})} \lesssim 1,
\qquad\qquad
&|v| + |\del v | + \sum_a | \underdel_a v| \lesssim  t^{-3/2} \lesssim s^{-3/2}.
\endaligned
$$

Recall the energy estimates \eqref{eq:w-EE}, so in the following we will only estimate 
$$
\int_2^s \big\| Q(u, v) \big\|_{L^2_f(\Hcal_{s'})} \, ds'.
$$

We will need the following lemma on estimating null forms, whose proof can be found in \cite{PLF-YM-book} for instance.
\begin{lemma}
For the functions $u, v$, we have
\be 
\big| \del_\alpha u \del^\alpha v \big|
+
\sum_{\alpha, \beta} \big| \del_\alpha u \del_\beta v - \del_\alpha v \del_\beta u  \big|
\lesssim
(s/t)^2 \big| \del_t u \del_t v \big|
+
\sum_a \Big( \big| \del_t u \underdel_a v \big| + \big| \del_t v \underdel_a u \big| \Big)
+
\sum_{a, b} \big| \underdel_a u \underdel_b v \big|.
\ee

\end{lemma}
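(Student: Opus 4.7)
The lemma is a purely algebraic identity expressed in the semi-hyperboloidal frame \eqref{eq:semi-hyper}, and the plan is to substitute $\del_a = -(x^a/t)\del_t + \underdel_a$ (with $\del_t = \underdel_0$ unchanged) into the left-hand side and track the cancellations. The single arithmetic identity doing the work is $\sum_a (x^a/t)^2 = r^2/t^2$, which combined with $t^2 - r^2 = s^2$ on $\Hcal_s$ produces the crucial $(s/t)^2$ weight.

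For the standard null form I would expand
$$
\del_\alpha u\,\del^\alpha v
= -\del_t u\,\del_t v + \sum_a \bigl(-(x^a/t)\del_t u + \underdel_a u\bigr)\bigl(-(x^a/t)\del_t v + \underdel_a v\bigr)
$$
and collect terms. The coefficient of $\del_t u\,\del_t v$ becomes $-1 + r^2/t^2 = -(s/t)^2$, which is the only place the weight enters. The remaining cross terms are of the form $(x^a/t)\del_t u\,\underdel_a v$ or $(x^a/t)\underdel_a u\,\del_t v$, bounded using $|x^a/t| \leq 1$ by the middle term on the right-hand side, while the pure products $\underdel_a u\,\underdel_a v$ match the last term.

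For the strong null form $\del_\alpha u\,\del_\beta v - \del_\beta u\,\del_\alpha v$ I would split by the values of $(\alpha,\beta)$. The case $\alpha = \beta$ vanishes trivially. When $\alpha = 0$ and $\beta = b$, substituting $\del_b$ yields
$$
\del_t u\,\underdel_b v - \underdel_b u\,\del_t v - (x^b/t)\bigl(\del_t u\,\del_t v - \del_t v\,\del_t u\bigr),
$$
and the bad $\del_t u\,\del_t v$ contribution cancels by antisymmetry, leaving only terms of the desired form (without even needing the $(s/t)^2$ factor). When both indices are spatial, the $\del_t u\,\del_t v$ coefficient equals $(x^a x^b - x^b x^a)/t^2 = 0$ by the same antisymmetry, and the remaining mixed and pure $\underdel$ terms are once again controlled by the right-hand side. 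The only step requiring any care is tracking this cancellation of the $\del_t u\,\del_t v$ contribution, which in the standard case is made possible by the hyperboloidal identity $t^2 - r^2 = s^2$ and in the strong case by the antisymmetry of the bracket; no analytic difficulty arises.
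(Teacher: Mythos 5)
Your computation is correct and is exactly the standard frame-decomposition argument: the paper itself does not prove this lemma but defers to \cite{PLF-YM-book}, where the proof is precisely this substitution of $\del_a = \underdel_a - (x^a/t)\del_t$, the identity $-1+r^2/t^2 = -(s/t)^2$ for the $\del_t u\,\del_t v$ coefficient in the null form, and the antisymmetric cancellation for the strong null forms. The only implicit ingredient worth stating is that $|x^a/t|\leq 1$ holds because all functions are supported in the cone $\Kcal$, which you correctly invoke.
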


\begin{itemize}

\item Let $Q(u, v) = (\del v)^2$ (similarly for $Q(u, v) = v^2, v \del v$), then we have
$$
\aligned
\int_2^s \big\| Q(u, v) \big\|_{L^2_f(\Hcal_{s'})} \, ds'
\lesssim
\int_2^s \big\| (s'/t) \del v \big\|_{L^2_f(\Hcal_{s'})} \big\| (t/s') \del v \big\|_{L^\infty(\Hcal_{s'})} \, ds'
\lesssim
\int_2^s s'^{-3/2} \, ds'
\lesssim 1.
\endaligned
$$

\item Let $Q(u, v) = \del u v$ (similarly for $Q(u, v) = u v$), then we have
$$
\aligned
\int_2^s \big\| Q(u, v) \big\|_{L^2_f(\Hcal_{s'})} \, ds'
&\lesssim
\int_2^s \Big( \big\| (s'/t) \del u \big\|_{L^2_f(\Hcal_{s'})} \big\| (t/s')  v \big\|_{L^\infty(\Hcal_{s'})} + \big\| \del u \big\|_{L^\infty(\Hcal_{s'})} \big\|  v \big\|_{L^2_f(\Hcal_{s'})}  \Big)\, ds'
\\
&\lesssim
\int_2^s s'^{-3/2} \, ds'
\lesssim 1.
\endaligned
$$

\item Let $Q(u, v) = \del u \del v$ (similarly for $Q(u, v) = u \del v, (\del u)^2, u \del u, u^2$), then we have
$$
\aligned
&\quad
\int_2^s \big\| Q(u, v) \big\|_{L^2_f(\Hcal_{s'})} \, ds'
\\
&\lesssim
\int_2^s \Big( \big\| (s'/t) \del u \big\|_{L^2_f(\Hcal_{s'})} \big\| (t/s') \del v \big\|_{L^\infty(\Hcal_{s'})} + \big\| (t/s') \del u \big\|_{L^\infty(\Hcal_{s'})} \big\| (s'/t) \del v \big\|_{L^2_f(\Hcal_{s'})}  \Big)\, ds'
\\
&\lesssim
\int_2^s s'^{-1} \, ds'
\lesssim \log s.
\endaligned
$$

\item Let $Q(u, v) = \del_\alpha u \del^\alpha v$ (similarly for $Q(u, v) = \del_\alpha u \del^\alpha u, \del_\alpha u \del_\beta v - \del_\alpha v \del_\beta u$), then we have
$$
\aligned
&\quad
\int_2^s \big\| Q(u, v) \big\|_{L^2_f(\Hcal_{s'})} \, ds'
\\
&\lesssim
\int_2^s \Big( \big\| (s'/t) \del u \big\|_{L^2_f(\Hcal_{s'})} \big\| (s'/t) \del v \big\|_{L^\infty(\Hcal_{s'})} 
+ \big\| (s'/t) \del u \big\|_{L^\infty(\Hcal_{s'})} \big\| (s'/t) \del v \big\|_{L^2_f(\Hcal_{s'})}
\\
&\qquad+ \sum_a \big\| (t/s') \underdel_a u \big\|_{L^\infty(\Hcal_{s'})} \big\| (s'/t) \del v \big\|_{L^2_f(\Hcal_{s'})} 
+ \sum_a \big\| \underdel_a u \big\|_{L^2_f(\Hcal_{s'})} \big\| \del v \big\|_{L^\infty(\Hcal_{s'})} 
\\
&\qquad+ \sum_a \big\| (t/s') \underdel_a v \big\|_{L^\infty(\Hcal_{s'})} \big\| (s'/t) \del u \big\|_{L^2_f(\Hcal_{s'})} 
+ \sum_a \big\| \underdel_a v \big\|_{L^2_f(\Hcal_{s'})} \big\| \del u \big\|_{L^\infty(\Hcal_{s'})} 
\\
&\qquad+ \sum_{a, b} \big\| \underdel_a u \big\|_{L^2_f(\Hcal_{s'})} \big\| \underdel_b v \big\|_{L^\infty(\Hcal_{s'})} 
+ \sum_{a, b} \big\| \underdel_a u \big\|_{L^\infty(\Hcal_{s'})} \big\| \underdel_b v \big\|_{L^2_f(\Hcal_{s'})} 
 \Big)\, ds'
\\
&\lesssim
\int_2^s s'^{-3/2} \, ds'
\lesssim 1.
\endaligned
$$

\end{itemize}

\paragraph{The ghost weight method}

For the linear wave component $u$ and the linear Klein-Gordon component $v$, we have (with $G_a = (x_a/|x|) \del_t + \del_a$)
$$
\aligned
&\| u \| +\| \del u \| \lesssim 1,
\hskip4.5cm
|u| + |\del u | \lesssim \langle t+r \rangle^{-1} \langle t-r\rangle^{-1/2},
\\
&\| \del v \| + \| v \| \lesssim 1,
\hskip4.5cm
|v| + |\del v | \lesssim \langle t+r \rangle^{-3/2},
\\
&\sum_a \int_{t_0}^t \Big( \Big\| {G_a u\over \langle t'-r \rangle^{1/2+\delta}} \Big\|^2 + \Big\| {G_a v \over \langle t'-r \rangle^{1/2+\delta}} \Big\|^2 \Big) \, dt' 
+
\int_{t_0}^t \Big\| {v\over \langle t'-r \rangle^{1/2+\delta}} \Big\|^2 \, dt' 
\lesssim 1.
\endaligned
$$

Recall the energy estimates \eqref{eq:NEE}, so we will only estimate 
$$
\int_{t_0}^t \big\| Q(u, v) \big\| \, dt'.
$$

To apply the ghost weight method for null forms, we will rely on the following version of estimates on null forms.
\begin{lemma}
For the functions $u, v$, we have
\be 
\big| \del_\alpha u \del^\alpha v \big|
+
\sum_{\alpha, \beta} \big| \del_\alpha u \del_\beta v - \del_\alpha v \del_\beta u  \big|
\lesssim
\sum_a \Big(\big| G_a u \del v \big| + \big| G_a v \del u \big|\Big).
\ee

\end{lemma}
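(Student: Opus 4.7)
The plan is to reduce both null forms to expressions in which every summand contains at least one good derivative $G_a = (x_a/r)\del_t + \del_a$, by substituting the pointwise identity $\del_a = G_a - (x_a/r)\del_t$ into each product and exploiting the trivial relation $\sum_a (x_a/r)^2 = 1$. This is a pure algebraic manipulation, so the argument should be a short calculation rather than an analytic estimate.

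For the scalar null form $\del_\alpha u \, \del^\alpha v = -\del_t u \, \del_t v + \sum_a \del_a u \, \del_a v$, I would substitute the decomposition into the spatial sum and expand to obtain
$\sum_a G_a u \, G_a v - \sum_a (x_a/r) \bigl( G_a u \, \del_t v + \del_t u \, G_a v \bigr) + \bigl( \sum_a (x_a/r)^2 \bigr) \del_t u \, \del_t v$.
The coefficient of $\del_t u \, \del_t v$ is exactly $1$, which cancels the $-\del_t u \, \del_t v$ coming from the time part of the Minkowski contraction. Every remaining term carries a $G_a$ factor on either $u$ or $v$, and $|x_a/r| \leq 1$, so the required bound by $\sum_a (|G_a u \, \del v| + |G_a v \, \del u|)$ follows immediately.

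For the strong null form $\del_\alpha u \, \del_\beta v - \del_\alpha v \, \del_\beta u$ I would proceed case by case. The diagonal case $\alpha = \beta$ is trivially zero. For the mixed case $\alpha = 0, \beta = b$, substituting $\del_b = G_b - (x_b/r)\del_t$ produces two copies of $(x_b/r) \del_t u \, \del_t v$ with opposite signs, which cancel; what remains is $\del_t u \, G_b v - \del_t v \, G_b u$. For the purely spatial case $\alpha = a, \beta = b$, the same substitution yields a sum of nine products in each of the two terms; the double time-derivative contributions come with coefficient $(x_a x_b / r^2)$ from both products and cancel by the antisymmetric structure, while the surviving eight terms each carry at least one $G_a$ or $G_b$ factor on $u$ or $v$, giving the claimed bound.

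The only potential pitfall is bookkeeping: one must carefully track the signs and the factors $x_a/r$ during the expansion to verify that every occurrence of $\del_t u \, \del_t v$ drops out. There is no genuine analytic difficulty. Once the Cartesian frame is expressed in terms of the good derivatives, the algebraic structure of both null forms forces the cancellation, and the bound by $\sum_a (|G_a u \, \del v| + |G_a v \, \del u|)$ emerges directly.
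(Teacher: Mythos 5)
Your proposal is correct and follows essentially the same route as the paper: substitute $\del_a = G_a - (x_a/r)\del_t$ into each null form, use $\sum_a (x_a/r)^2 = 1$ (resp.\ the antisymmetry) to cancel the pure $\del_t u\,\del_t v$ contributions, and bound the surviving terms, each of which carries a $G_a$ factor, using $|x_a|/r \leq 1$. The only blemish is a harmless miscount in the purely spatial case (each product expands into four terms, not nine, so six terms survive after the cancellation, not eight); the argument itself is unaffected.
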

\begin{proof}
Recall that
$$
G_a = {x_a \over r} \del_t + \del_a = G^a,
$$
which gives us
$$
\del_a = G_a  - {x_a \over r} \del_t.
$$
Thus we have
$$
\aligned
\del_\alpha u \del^\alpha v
&=
-\del_t u \del_t v
+ \del_a u \del^a v
=
-\del_t u \del_t v
+ \big(  G_a u - {x_a \over r} \del_t u \big)   \big(  G^a v - {x^a \over r} \del_t v \big)
\\
&=
G_a u G^a v - G_a u {x^a \over r} \del_t v - {x_a \over r} \del_t u G^a v,
\endaligned
$$
as well as
$$
\aligned
\del_t u \del_a v - \del_t v \del_a u
&=
\del_t u \big( G_a v - {x_a \over r} \del_t v \big) 
-
\del_t v \big( G_a u - {x_a \over r} \del_t u \big) 
=
\del_t u G_a v
-
\del_t v G_a u,
\\
\del_a u \del_b v - \del_a v \del_b u
&=
\big( G_a u - {x_a \over r} \del_t u \big)   \big( G_b v - {x_b \over r} \del_t v \big) 
-
\big( G_a v - {x_a \over r} \del_t v \big)    \big( G_b u - {x_b \over r} \del_t u \big) 
\\
&=
G_a u G_b v - G_a u {x_b \over r} \del_t v - {x_a \over r} \del_t u G_b v
- G_a v G_b u + G_a v {x_b \over r} \del_t u + {x_a \over r} \del_t v G_b u.
\endaligned$$

The observation
$$
{|x_a| \over r} \leq 1,
\qquad
\big|G_a u \big| + \big| G_a v \big| \lesssim |\del u| + |\del v|
$$
concludes the desired result.
\end{proof}

\begin{itemize}

\item Let $Q(u, v) = (\del v)^2$ (similarly for $Q(u, v) = v^2, v \del v$), then we have
$$
\aligned
\int_{t_0}^t \big\| Q(u, v) \big\| \, dt'
\lesssim
\int_{t_0}^t \big\| \del v \big\| \big\| \del v \big\|_{L^\infty} \, dt'
\lesssim
\int_{t_0}^t \langle t' \rangle^{-3/2} \, dt'
\lesssim 1.
\endaligned
$$

\item Let $Q(u, v) = \del u v$ (similarly for $Q(u, v) = u v$), then we have
$$
\aligned
\int_{t_0}^t \big\| Q(u, v) \big\| \, dt'
&\lesssim
\int_{t_0}^t  \big\| \del u \big\| \big\| v \big\|_{L^\infty}  \, dt' 
+ \int_{t_0}^t  \big\| \langle t-r \rangle^{1/2+\delta} \del u \big\|_{L^\infty} \Big\| {v \over \langle t-r \rangle^{1/2+\delta}} \Big\| \, dt'
\\
&\lesssim
\int_{t_0}^t \langle t' \rangle^{-3/2} \, dt'
+
\Big(\int_{t_0}^t \langle t' \rangle^{-2 + 2\delta} \, dt' \Big)^{1/2} \Big(\int_{t_0}^t  \Big\| {v \over \langle t-r \rangle^{1/2+\delta}} \Big\|^2  \, dt' \Big)^{1/2}
\lesssim 1.
\endaligned
$$

\item Let $Q(u, v) = \del u \del v$ (similarly for $Q(u, v) = u \del v, (\del u)^2, u \del u, u^2$), then we have
$$
\aligned
\int_{t_0}^t \big\| Q(u, v) \big\| \, dt'
&\lesssim
\int_{t_0}^t  \big\| \del u \big\| \big\| \del v \big\|_{L^\infty}  \, dt' + \int_{t_0}^t  \big\| \del u \big\|_{L^\infty}  \big\|\del v \big\| \, dt'
\\
&\lesssim
\int_{t_0}^t \langle t' \rangle^{-1} \, dt'
\lesssim \log \langle t \rangle.
\endaligned
$$

\item Let $Q(u, v) = \del_\alpha u \del^\alpha v$ (similarly for $Q(u, v) = \del_\alpha u \del^\alpha u, \del_\alpha u \del_\beta v - \del_\alpha v \del_\beta u$), then we have
$$
\aligned
\int_{t_0}^t \big\| Q(u, v) \big\| \, dt'
&\lesssim
\sum_a \int_{t_0}^t  \Big\| {G_a u \over \langle t'-r \rangle^{1/2+\delta}} \Big\| \big\| \langle t'-r \rangle^{1/2+\delta} \del v \big\|_{L^\infty}  \, dt' 
\\
&\qquad
+ \int_{t_0}^t  \big\| \langle t'-r \rangle^{1/2+\delta} \del u \big\|_{L^\infty} \Big\| {G_a v \over \langle t'-r \rangle^{1/2+\delta}} \Big\| \, dt'
\\
&\lesssim
\Big( \int_{t_0}^t \langle t' \rangle^{-2+2\delta} \, dt' \Big)^{1/2}
\lesssim 1.
\endaligned
$$

\end{itemize}

From the above calculations for a few types of nonlinearities, we find that the spacetime integral is either uniformly bounded in both methods or has the same logarithmic growth in both methods. One advantage in the ghost weight method is that one does not need the solutions to have compact support.

%----------------------------------------------------------------------------------------------------------------

%==============================================================================================

\section{Contraction mapping argument}
\label{sec:Contraction}

\subsection{The solution space}\label{subsec:BA}

%For easy readability we copy the reformulated Klein-Gordon-Zakharov equations in \eqref{eq:model-re} and the initial data
%$$
%\aligned
%&-\Box E + E = -n^0 E - \Delta n^\Delta E,
%\\
%&-\Box n^0 = 0,
%\qquad
%-\Box n^\Delta = |E|^2,
%\\
%&\big(E, \del_t E, n^0, \del_t n^0, n^\Delta, \del_t n^\Delta \big)(t_0)
%=
%(E_0, E_1, n_0, n_1, 0, 0),
%\endaligned
%$$
%with the relation
%$$
%n = n^0 + \Delta n^\Delta.
%$$

Given a pair of functions $(\Psi, \phi)$, with $\Psi$ taking values in $\RR^3$ and $\phi$ taking values in $\RR$, we define its $X$-norm by
\bel{eq:X-norm}
\aligned
\big\| (\Psi, \phi) \big\|_X
&:=
\sum_{|I| \leq N+1} \Big(\Ecal_{gst, 1} (t, \Gamma^I \Psi)^{1/2} 
+ \big\| \Gamma^I \phi \big\| \Big)
+\sum_{|I| \leq N-3 } \langle t+|x| \rangle^{3/2-\delta} \big| \Gamma^I \Psi \big|
\\
&+ \sum_{|I| \leq N-7} \sup_{t\geq t_0, \, x} \langle t+|x| \rangle^{3/2} | \Gamma^I \Psi |
+ \sum_{|I| \leq N-9} \sup_{t\geq t_0, \, x} \langle t+|x| \rangle \langle t-|x| \rangle^{1/2} | \Gamma^I \phi |,
\endaligned
\ee
in which $N\geq 15$ and $0 <\delta \ll 1/10$.

The solution space $X$ is defined as follows. 
\begin{definition}
A pair of functions $(\Psi, \phi)$ defined in $[t_0, +\infty) \times \RR^3$, with $\Psi$ taking values in $\RR^3$ and $\phi$ taking values in $\RR$, is said to lie in space $X$, if the pair of functions satisfy
\begin{itemize}

\item
\be 
\big(\Psi, \del_t \Psi, \phi, \del_t \phi \big)(t_0)
= (E_0, E_1, n_0, n_1).
\ee
\item 
\be
\big\| (\Psi, \phi) \big\|_X
\leq C_1 \eps,
\ee
with $C_1 \gg 1$ to be determined, and $\eps$ the size of the initial data.

\end{itemize}

\end{definition}

\subsection{The contraction mapping}

\begin{definition}
The solution mapping $T$ maps a pair of functions $(\Psi, \phi) \in X$ to the unique pair of functions $\big(\widetilde{\Phi}, \widetilde{\phi}\big)$, satisfying
\bel{eq:solution-map}
\aligned
&-\Box \widetilde{\Phi} + \widetilde{\Phi} = - \phi \Psi,
\\
&-\Box \widetilde{\phi} = \Delta |\Psi|^2,
\\
&\big(\widetilde{\Phi}, \del_t \widetilde{\Phi}, \widetilde{\phi}, \del_t \widetilde{\phi} \big)(t_0)
=
(E_0, E_1, n_0, n_1),
\endaligned
\ee
and we will denote $\big(\widetilde{\Psi}, \widetilde{\phi}\big) = T (\Psi, \phi)$.
\end{definition}

We want to show that the solution mapping $T$ maps an element in the solution space $X$ into $X$, and is a contraction mapping.
\begin{proposition}\label{prop:contraction001}
For two elements $(\Psi, \phi), (\Psi', \phi') \in X$, we denote $\big(\widetilde{\Psi}, \widetilde{\phi}\big) = T (\Psi, \phi), \big(\widetilde{\Psi'}, \widetilde{\phi'}\big) = T (\Psi', \phi')$, then we have
\bel{eq:refined}
\aligned
\big(\widetilde{\Psi}, \widetilde{\phi}\big), \big(\widetilde{\Psi'}, \widetilde{\phi'}\big) & \in X,
\\
\big\| \big( \widetilde{\Psi} - \widetilde{\Psi'}, \widetilde{\phi} - \widetilde{\phi'} \big)\big\|_X
&\leq {1\over 2} \big\| (\Psi - \Psi', \phi - \phi') \big\|_X.
\endaligned
\ee

\end{proposition}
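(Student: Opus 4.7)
The plan is to run a single bootstrap that simultaneously closes $\|(\widetilde{\Psi},\widetilde{\phi})\|_X \leq C_1\eps$ and the contraction inequality, since both reduce to the same Leibniz splittings applied to the nonlinearities $\phi\Psi$ and $\Delta|\Psi|^2$. The essential novelty is that one \emph{must} invoke the ghost weight energy of Proposition \ref{prop:gst} on the Klein-Gordon unknown $\widetilde{\Psi}$ in order to obtain a $t$-uniform bound: the source term $\phi\cdot\Psi$ carries no derivatives, so only the $\langle t-r\rangle^{1/2+\delta}$ weight that ghost weight offers can be traded against the slow pointwise decay $\langle t+r\rangle^{-1}\langle t-r\rangle^{-1/2}$ of the wave factor $\phi$.

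Step 1 (ghost weight for $\widetilde{\Psi}$). Commute $\Gamma^I$ with $|I|\leq N+1$ through the first equation of \eqref{eq:solution-map} using Lemma \ref{lem:comm2}, apply the ghost weight estimate \eqref{eq:GEE}, and estimate the source term via
\begin{equation*}
\int_{t_0}^t \|\Gamma^{I_1}\phi\,\Gamma^{I_2}\Psi\|\,dt' \leq \Bigl(\int_{t_0}^t \|\langle t'-r\rangle^{1/2+\delta}\Gamma^{I_1}\phi\|_{L^\infty}^2\,dt'\Bigr)^{\!1/2}\Bigl(\int_{t_0}^t \|\langle t'-r\rangle^{-1/2-\delta}\Gamma^{I_2}\Psi\|^2\,dt'\Bigr)^{\!1/2}
\end{equation*}
when $|I_2|$ carries the top derivative. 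The pointwise part of the $X$-norm delivers $\|\langle t'-r\rangle^{1/2+\delta}\Gamma^{I_1}\phi\|_{L^\infty}\lesssim \langle t'\rangle^{-1+\delta}$, whose square is integrable in $t'$, while the ghost weight part of $\|(\Psi,\phi)\|_X$ controls the second factor by $C_1\eps$. The symmetric case (top derivative on $\phi$) is easier: place $\Gamma^{I_2}\Psi$ in $L^\infty$, where its $\langle t+r\rangle^{-3/2+\delta}$ pointwise decay is integrable in time against the plain $L^2$ bound on $\Gamma^{I_1}\phi$.

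Step 2 ($L^2$ bound for $\widetilde{\phi}$). Exploit the reformulation introduced in \eqref{eq:model-re}: write $\widetilde{\phi}=\widetilde{\phi}^0 + \Delta \widetilde{\phi}^\Delta$ where $\widetilde{\phi}^0$ is the free wave issued from $(n_0,n_1)$ and $-\Box\widetilde{\phi}^\Delta = |\Psi|^2$ with zero data. The conformal energy estimate \eqref{eq:conformal-EE} controls $\|\Gamma^I \widetilde{\phi}^0\|$ uniformly in $t$. For the second piece, apply natural energy \eqref{eq:NEE} to $\del_a\Gamma^I\widetilde{\phi}^\Delta$; this controls $\|\Delta\Gamma^I \widetilde{\phi}^\Delta\|$ once $\int_{t_0}^t \|\del\Gamma^I|\Psi|^2\|\,dt'$ is handled, which is done by the same Hölder splitting as in Step 1, now placing a low-order $\Psi$-factor in $L^\infty$ using the pointwise component of the $X$-norm.

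Step 3 (pointwise decay and contraction). Proposition \ref{prop:G1} upgrades the $L^2$ bounds on $\Gamma^I\widetilde{\Psi}$ to the two pointwise rates $\langle t+r\rangle^{-3/2}$ (for $|I|\leq N-7$) and $\langle t+r\rangle^{-3/2+\delta}$ (for $|I|\leq N-3$) prescribed in the $X$-norm, since the RHS $\Gamma^I(\phi\Psi)$ obeys a weighted bound of the form required by Proposition \ref{prop:G1}. Lemma \ref{lem:Japan} combined with Proposition \ref{prop:K-S} gives the corresponding $\langle t+r\rangle^{-1}\langle t-r\rangle^{-1/2}$ decay for $\widetilde{\phi}$. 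For the contraction half of \eqref{eq:refined}, the differences $(\delta\Psi,\delta\phi)=(\Psi-\Psi',\phi-\phi')$ solve the linearized system with sources $-(\phi\,\delta\Psi+\delta\phi\,\Psi')$ and $\Delta((\Psi+\Psi')\cdot\delta\Psi)$; repeating Steps 1--2 with one factor drawn from $(\delta\Psi,\delta\phi)$ and the other bounded by $C_1\eps$ produces a prefactor of size $CC_1\eps$, below $1/2$ once $\eps$ is chosen small relative to $C_1$. The hard part throughout is Step 1: without ghost weight the best one achieves for $\int\|\phi\Psi\|\,dt'$ is logarithmic in $t$; forcing the exponents $1/2+\delta$ on both sides of the Hölder split to land on a square-integrable pointwise decay for $\phi$ and on the ghost weight integral for $\Psi$ is exactly what dictates the delicate choice of weights in the definition \eqref{eq:X-norm} of the $X$-norm.
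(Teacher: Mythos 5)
Your overall architecture coincides with the paper's: ghost weight energy for the Klein--Gordon unknown with the Hölder split pairing $\|\langle t'-r\rangle^{1/2+\delta}\Gamma^{I_1}\phi\|_{L^\infty}$ against the ghost spacetime integral of $\Gamma^{I_2}\Psi$; the decomposition $\widetilde{\phi}=\widetilde{\phi}^0+\Delta\widetilde{\phi}^\Delta$ with natural energy applied to $\del\Gamma^I\widetilde{\phi}^\Delta$; Georgiev's inequality (Proposition \ref{prop:G1}) for the two Klein--Gordon pointwise rates; and the linearized difference system for the contraction half. All of that is faithful to the paper.

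There is, however, one genuine gap, and it sits at the most delicate point of the scheme: the recovery of the pointwise decay $|\Gamma^I\widetilde{\phi}|\lesssim\langle t+r\rangle^{-1}\langle t-r\rangle^{-1/2}$, which you yourself need in Step 1 to make $\|\langle t'-r\rangle^{1/2+\delta}\Gamma^{I_1}\phi\|_{L^\infty}$ square-integrable in time. You assert this follows from ``Lemma \ref{lem:Japan} combined with Proposition \ref{prop:K-S},'' but that combination does not suffice. Lemma \ref{lem:Japan} produces $|\del\del\Gamma^I\widetilde{\phi}^\Delta|\lesssim\langle |x|\rangle^{-1}\langle t-|x|\rangle^{-1}$, which converts to the desired bound only in the exterior $|x|\gtrsim t$; in the interior $|x|\leq t/2$ the factor $\langle|x|\rangle^{-1}$ is worthless near the origin. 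There the Klainerman--Sobolev inequality applied to the uniformly bounded natural energy gives only $\langle t+r\rangle^{-1}$, whereas the target in that region is $\langle t+r\rangle^{-1}\langle t-r\rangle^{-1/2}\simeq\langle t+r\rangle^{-3/2}$ --- a full extra half power is missing. The paper closes this by a separate argument you omit: a conformal energy estimate on $\del\Gamma^I\widetilde{\phi}^\Delta$ (not just on the free wave $\widetilde{\phi}^0$) yielding $\Ecal_{con}(t,\del\Gamma^I\widetilde{\phi}^\Delta)^{1/2}\lesssim\eps+(C_1\eps)^2\langle t\rangle^{1/2}$, followed by Klainerman--Sobolev applied to $L_0\del\Gamma^I\widetilde{\phi}^\Delta$ and $\Gamma\del\Gamma^I\widetilde{\phi}^\Delta$ to get $\langle t+r\rangle^{-1/2}$ pointwise, and finally the scaling identity $|\del w|\lesssim\langle t-r\rangle^{-1}\big(|L_0w|+|\Gamma w|\big)$ to produce $\langle t+r\rangle^{-1/2}\langle t-r\rangle^{-1}$, which in the interior is exactly the missing rate. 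This ``novel use of the scaling vector field'' is flagged in the introduction as a key ingredient; without it your Step 1 weight bound on $\phi$, and hence the uniform ghost weight energy bound, does not close.
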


We first explore some properties enjoyed by the elements in the solution space $X$.

\begin{lemma}
Let $(\Psi, \phi) \in X$, then we have
\be 
\aligned
\int_{t_0}^t \Big\| {\Gamma^I \Psi \over \langle t'-|x| \rangle^{1/2+\delta}} \Big\|^2 \, dt'
\lesssim (C_1 \eps)^2,
\qquad
|I| \leq N+1,
\\
\big\| \del \Gamma^I \Psi \big\| + \big\| \Gamma^I \Psi \big\| +  \big\| \Gamma^I \phi \big\|
\lesssim C_1 \eps,
\qquad
|I| \leq N+1,
\\
\big| \Gamma^I \Psi(t, x) \big|
\lesssim C_1 \eps \langle t+|x| \rangle^{-3/2+\delta},
\qquad
|I| \leq N-3,
\\
\big| \Gamma^I \Psi(t, x) \big|
\lesssim C_1 \eps \langle t+|x| \rangle^{-3/2},
\qquad
|I| \leq N-7,
\\
\big| \Gamma^I \phi(t, x) \big|
\lesssim C_1 \eps \langle t+|x| \rangle^{-1} \langle t-|x| \rangle^{-1/2},
\qquad
|I| \leq N-9,
\\
\big| \Gamma^I \Psi(t, x) \big| + \big| \Gamma^I \phi(t, x) \big|
\lesssim C_1 \eps \langle t+|x| \rangle^{-1},
\qquad
|I| \leq N-2.
\endaligned
\ee

\end{lemma}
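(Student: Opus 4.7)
The plan is to derive all six bounds as essentially direct unpackings of the definition of the $X$-norm in \eqref{eq:X-norm}, with only the final estimate requiring a short Klainerman--Sobolev interpolation step. Since $(\Psi,\phi)\in X$ by hypothesis, $\|(\Psi,\phi)\|_X\leq C_1\eps$ and hence each summand in \eqref{eq:X-norm} is itself bounded by $C_1\eps$; nothing dynamical needs to be invoked. This is fundamentally a bookkeeping lemma.

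First I would handle the $L^2$ estimates. Reading Proposition \ref{prop:gst} with mass $m=1$, the quantity $\Ecal_{gst,1}(t,\Gamma^I\Psi)$ dominates simultaneously the natural energy $\|\del\Gamma^I\Psi\|^2+\|\Gamma^I\Psi\|^2$ at time $t$ and the weighted spacetime integral $\int_{t_0}^t\int_{\RR^3}(\Gamma^I\Psi)^2/\langle t'-r\rangle^{1+\delta}\,dxdt'$. Combining this with the bound $\Ecal_{gst,1}(t,\Gamma^I\Psi)^{1/2}\lesssim C_1\eps$ supplied by the first summand of $\|\cdot\|_X$ yields both the first estimate in the lemma and the $\Psi$ part of the second (after perhaps relabelling $\delta$). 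The bound $\|\Gamma^I\phi\|\lesssim C_1\eps$ for $|I|\leq N+1$ appears verbatim as a summand of $\|\cdot\|_X$, so the second estimate is complete.

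The three pointwise estimates at low regularity (items three, four, five) are then direct transpositions of the three weighted supremum summands on the second and third lines of \eqref{eq:X-norm}: moving the weight in $\langle t+|x|\rangle$ and, for $\phi$, $\langle t-|x|\rangle$ to the right-hand side gives exactly the stated decay at exactly the stated regularity range, with no further computation.

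Finally, for the uniform $\langle t+|x|\rangle^{-1}$ bound at the top regularity level $|I|\leq N-2$, I would split into the low-regularity range (already covered by items four and five, since $\langle t-|x|\rangle^{-1/2}\leq 1$ and $\langle t+|x|\rangle^{-3/2+\delta}\leq \langle t+|x|\rangle^{-1}$) and the intermediate ranges $N-7<|I|\leq N-2$ for $\Psi$ and $N-9<|I|\leq N-2$ for $\phi$. On the latter I would apply Proposition \ref{prop:K-S} in the form
$$\langle t+r\rangle \bigl|\Gamma^I u(t,x)\bigr|\lesssim \sup_{|J|\leq 3,\,0\leq s\leq 2t}\bigl\|\Gamma^J\Gamma^I u(s)\bigr\|.$$
Since $|I|+|J|\leq N+1$ throughout this range, the required $L^2$ norms are controlled uniformly in $s$ by the already-established second estimate of the lemma. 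The only mild subtlety, not a genuine obstacle, is that Klainerman--Sobolev requires $L^2$ control up to $s=2t$ rather than just at the current time; this is exactly what the time-independent nature of the $X$-norm bound delivers, so the argument closes without further work.
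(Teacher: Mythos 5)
Your proposal is correct and follows essentially the same route as the paper: the first five bounds are read off directly from the definition of the $X$-norm together with the fact that $\Ecal_{gst,1}$ controls both the natural Klein--Gordon energy and the weighted spacetime integral, and the last bound comes from the uniform-in-time $L^2$ estimate at order $N+1$ combined with the Klainerman--Sobolev inequality of Proposition \ref{prop:K-S} (whose sup over $0\leq s\leq 2t$ is harmless precisely because the $X$-norm bound is time-independent). The only difference is cosmetic: the paper states this in two sentences, whereas you spell out the bookkeeping, including the minor $\delta$ relabelling in the ghost-weight term and the unnecessary but harmless case split in the final estimate.
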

\begin{proof}
Except the last estimates, the proof for others follows from the definition of the solution space $X$ and the ghost weight energy $\Ecal_{gst, m}(t, \Psi)$.

As for the last estimates, it follows from the second estimates and the Klainerman-Sobolev inequality in Proposition \ref{prop:K-S}.
\end{proof}

Let $(\Psi, \phi) \in X$, and denote $\big(\widetilde{\Psi}, \widetilde{\phi}\big) = T (\Psi, \phi)$.
We decompose the wave component $\widetilde{\phi}$, and we reformulate the equations as
\bel{eq:reform001}
\aligned
&-\Box \widetilde{\Phi} + \widetilde{\Phi} = - \phi \Psi,
\\
&-\Box  \widetilde{\phi}^0 = 0,
\qquad
-\Box  \widetilde{\phi}^\Delta = |\Psi|^2, 
\\
&\big(\widetilde{\Phi}, \del_t \widetilde{\Phi}, \widetilde{\phi}^0, \del_t \widetilde{\phi}^0,  \widetilde{\phi}^\Delta, \del_t \widetilde{\phi}^\Delta \big)(t_0)
=
(E_0, E_1, n_0, n_1, 0, 0),
\endaligned
\ee
with the relation $\widetilde{\phi} = \widetilde{\phi}^0 + \Delta \widetilde{\phi}^\Delta$. 
We act the vector fields $\Gamma^I, \Gamma^J, \del \Gamma^I$, with $|I| \leq N+1, |J| \leq N$, to the reformulated equations to get
\bel{eq:reform002}
\aligned
&-\Box \Gamma^I \widetilde{\Phi} + \Gamma^I \widetilde{\Phi} = - \Gamma^I \big(\phi \Psi \big),
\\
&-\Box \Gamma^J \widetilde{\phi}^0 = 0,
\qquad
-\Box \del \Gamma^I \widetilde{\phi}^\Delta = \del \Gamma^I |\Psi|^2, 
\\
&\big(\widetilde{\Phi}, \del_t \widetilde{\Phi}, \widetilde{\phi}^0, \del_t \widetilde{\phi}^0,  \widetilde{\phi}^\Delta, \del_t \widetilde{\phi}^\Delta \big)(t_0)
=
(E_0, E_1, n_0, n_1, 0, 0).
\endaligned
\ee

\begin{lemma}[Conformal energy estimates for $\widetilde{\phi}^\Delta$ component]
We have
\bel{eq:wave-con}
E_{con} (t, \del \Gamma^I \widetilde{\phi}^\Delta )^{1/2}
\lesssim \eps + (C_1 \eps)^2 \langle t \rangle^{1/2},
\qquad
|I| \leq N+1.
\ee
\end{lemma}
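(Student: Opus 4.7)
The plan is to apply the conformal energy estimate \eqref{eq:conformal-EE} to the wave equation $-\Box \del \Gamma^I \widetilde{\phi}^\Delta = \del \Gamma^I |\Psi|^2$, reducing the task to bounding the initial conformal energy $\Ecal_{con}(t_0, \del \Gamma^I \widetilde{\phi}^\Delta)^{1/2}$ and the spacetime integral $\int_{t_0}^t \|\la t'+|x|\ra \del \Gamma^I |\Psi|^2\| \, dt'$. Since $\widetilde{\phi}^\Delta(t_0,\cdot) = 0$ and $\del_t \widetilde{\phi}^\Delta(t_0,\cdot) = 0$, at $t_0 = 0$ all spatial derivatives of $\widetilde{\phi}^\Delta$ vanish and any higher-order time derivative may be converted via the equation $\del_t^2 \widetilde{\phi}^\Delta = \Delta \widetilde{\phi}^\Delta + |\Psi|^2$ into weighted Sobolev expressions in $|E_0|^2$ and its derivatives, which are controlled by $\eps^2 \lesssim \eps$ thanks to the smallness of the initial data.

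The substantive part is the spacetime integral. I would expand $\del \Gamma^I |\Psi|^2$ using Leibniz together with the commutator bounds of Lemma~\ref{lem:comm2} into a sum of bilinear terms of the form $(\del \Gamma^{I_1}\Psi)\cdot \Gamma^{I_2}\Psi$ with $|I_1| + |I_2| \leq N+1$. By symmetry I may assume $|I_2| \leq \lfloor (N+1)/2 \rfloor \leq N-7$ (which uses $N \geq 15$), so the pointwise bound $|\Gamma^{I_2}\Psi| \lesssim C_1\eps \la t+|x|\ra^{-3/2}$ from the $X$-norm is available. Placing the low-order factor together with the conformal weight in $L^\infty$ and the high-order factor in $L^2$ gives
\[
\big\| \la t'+|x|\ra \, \del\Gamma^{I_1}\Psi \cdot \Gamma^{I_2}\Psi \big\|
\leq
\big\| \la t'+|x|\ra \Gamma^{I_2}\Psi \big\|_{L^\infty} \big\|\del\Gamma^{I_1}\Psi\big\|
\lesssim C_1\eps \la t'\ra^{-1/2} \cdot C_1\eps,
\]
and integration in $t'$ yields the $(C_1\eps)^2 \la t\ra^{1/2}$ contribution claimed in \eqref{eq:wave-con}.

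The main obstacle is not in any single step but in the realization that the $\la t\ra^{1/2}$ growth is unavoidable: the conformal weight $\la t'+|x|\ra$ consumes exactly one power of the sharp Klein-Gordon decay $\la t+|x|\ra^{-3/2}$, leaving $\la t'\ra^{-1/2}$, which is borderline non-integrable in $t'$. The $\la t-r\ra$ gains supplied by the ghost-weight method do not help here, because the conformal-energy weight is purely of the form $\la t+|x|\ra$ and offers no factor to trade against $\la t-r\ra$. Accepting this growth is in fact acceptable for the bootstrap: the bound \eqref{eq:wave-con} will only be used, via the Klainerman-Sobolev inequality of Proposition~\ref{prop:K-S} applied after a division by $\la t+|x|\ra$ and an extra $L_a$-commutator, to extract pointwise decay of $\widetilde{\phi}^\Delta$, which still yields a decaying quantity.
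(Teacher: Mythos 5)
Your proposal is correct and follows essentially the same route as the paper: apply the conformal energy estimate \eqref{eq:conformal-EE} to $-\Box \del\Gamma^I\widetilde{\phi}^\Delta = \del\Gamma^I|\Psi|^2$, bound the data contribution by $\eps$, and split the quadratic term so that the factor of order $\leq N-7$ absorbs the weight $\langle t'+|x|\rangle$ in $L^\infty$ via the $\langle t+|x|\rangle^{-3/2}$ decay from the $X$-norm while the high-order factor is measured in $L^2$, yielding $\int_{t_0}^t\langle t'\rangle^{-1/2}\,dt'\lesssim\langle t\rangle^{1/2}$. The only cosmetic difference is that the paper writes the Leibniz splitting as two explicit sums (with the extra $\del$ falling on either factor) rather than invoking symmetry, but the estimate is the same.
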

\begin{proof}
The proof is straightforward. First act $\del \Gamma^I$, with $|I| \leq N+1$, to the $\widetilde{\phi}^\Delta$ equation to get
$$
-\Box \del \Gamma^I \widetilde{\phi}^\Delta = \del \Gamma^I |\Psi|^2.
$$
We then apply the conformal energy estimates in \eqref{eq:conformal-EE} to find (recall $N \geq 15$)
$$
\aligned
E_{con} (t, \del \Gamma^I \widetilde{\phi}^\Delta)^{1/2}
&\lesssim
E_{con} (t_0, \del \Gamma^I \widetilde{\phi}^\Delta)^{1/2}
+
\int_{t_0}^t \big\| \langle t'+|x| \rangle \del \Gamma^I \big( |\Psi|^2 \big) \big\| \,  dt'
\\
&\lesssim
\eps 
+ 
\sum_{|I_1| \leq N-7, |I_2| \leq N+1} \int_{t_0}^t \big\| \langle t'+|x| \rangle \Gamma^{I_1} \Psi \big\|_{L^\infty} \big\| \del \Gamma^{I_2} \Psi \big\| \, dt'
\\
&+ 
\sum_{|I_1|\leq N-7, |I_2| \leq N+1} \int_{t_0}^t \big\| \langle t'+|x| \rangle \Gamma^{I_1} \Psi \big\|_{L^\infty} \big\| \Gamma^{I_2} \Psi \big\| \, dt'
\\
&\lesssim
\eps + (C_1 \eps)^2 \langle t \rangle^{1/2}.
\endaligned
$$

The proof is done.
\end{proof}

As a consequence, we are able to obtain good pointwise estimates for $\widetilde{\phi}$ away from the light cone by the aid of the scaling vector field, and this strategy of taking advantage of the scaling vector field in the coupled wave and Klein-Gordon equations was first used in \cite{Dong1912}.
\begin{lemma}\label{lem:sup-wave}
Let $(\Psi, \phi) \in X$, and denote $\big(\widetilde{\Psi}, \widetilde{\phi}\big) = T (\Psi, \phi)$, then we have
\bel{eq:pointwise-in}
\big| \Gamma^I \widetilde{\phi} |
\lesssim
\big(\eps + (C_1 \eps)^2\big) \langle t+|x| \rangle^{-1/2} \langle t-|x| \rangle^{-1},
\qquad
|I| \leq N-2.
\ee 
\end{lemma}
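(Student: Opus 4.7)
The plan is to use the decomposition $\widetilde{\phi} = \widetilde{\phi}^0 + \Delta \widetilde{\phi}^\Delta$ from \eqref{eq:reform001} and treat the two pieces separately. The free component $\widetilde{\phi}^0$ solves $\Box \Gamma^I \widetilde{\phi}^0 = 0$ with polynomially-weighted Cauchy data $(n_0,n_1)$; the weights make the conformal energy $E_{con}(t,\Gamma^I \widetilde{\phi}^0)\lesssim\eps^2$ (conserved along the free flow), and a Klainerman--Sobolev argument (strengthened in the interior by the scaling vector field $L_0$) produces $|\Gamma^I\widetilde{\phi}^0(t,x)| \lesssim \eps\langle t+|x|\rangle^{-1/2}\langle t-|x|\rangle^{-1}$ for $|I|\le N-2$, meeting the target. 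The substance of the lemma lies in $\Gamma^I\Delta \widetilde{\phi}^\Delta$.

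The key algebraic trick (used in \cite{Dong1912}) is to trade the spatial Laplacian for a time derivative via the wave equation $-\Box\widetilde{\phi}^\Delta = |\Psi|^2$:
\[
\Delta \widetilde{\phi}^\Delta = \del_t^2\widetilde{\phi}^\Delta - |\Psi|^2.
\]
The nonlinear contribution $\Gamma^I|\Psi|^2$ is easy: distributing vector fields yields a sum of products $\Gamma^{I_1}\Psi\cdot\Gamma^{I_2}\Psi$ with $|I_1|+|I_2|\le N-2$; the factor with fewer vector fields (at most $\lfloor(N-2)/2\rfloor \le N-7$) uses the direct pointwise bound $\lesssim C_1\eps\langle t+|x|\rangle^{-3/2}$ available in the space $X$, while the other factor uses Klainerman--Sobolev (Proposition \ref{prop:K-S}) applied to the $L^2$ bound of $X$ to obtain $\lesssim C_1\eps\langle t+|x|\rangle^{-1}$. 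The product $\lesssim (C_1\eps)^2\langle t+|x|\rangle^{-5/2}$ is much stronger than the target.

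The main term is $\Gamma^I\del_t^2\widetilde{\phi}^\Delta$. After commuting $\del_t^2$ past $\Gamma^I$ via Lemma \ref{lem:comm2} (commutators are lower-order and absorbed recursively), apply the scaling identity \eqref{eq:scaling} to $u = \del \Gamma^I\widetilde{\phi}^\Delta$ to extract the decisive pointwise gain
\[
\langle t-|x|\rangle\,|\del^2\Gamma^I \widetilde{\phi}^\Delta(t,x)| \lesssim |\Gamma\del\Gamma^I \widetilde{\phi}^\Delta(t,x)| + |L_0\del\Gamma^I\widetilde{\phi}^\Delta(t,x)|.
\]
For each pointwise quantity on the right apply Klainerman--Sobolev. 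The crucial bookkeeping observation is that the conformal energy $E_{con}(t,\del\Gamma^J\widetilde{\phi}^\Delta)^{1/2}$ already controls both $\|\Gamma(\del\Gamma^J\widetilde{\phi}^\Delta)\|$ and $\|L_0(\del\Gamma^J\widetilde{\phi}^\Delta)\|$, so Klainerman--Sobolev effectively costs only two extra vector fields rather than three. With the one vector field consumed by scaling this gives total overhead $3$, exactly matching $|I|+3\le N+1$, i.e.\ $|I|\le N-2$. Plugging in the conformal energy bound \eqref{eq:wave-con} produces
\[
|\Gamma\del\Gamma^I\widetilde{\phi}^\Delta(t,x)| + |L_0\del\Gamma^I\widetilde{\phi}^\Delta(t,x)| \lesssim \langle t+|x|\rangle^{-1}\sup_{|J|\le N+1}E_{con}(t,\del\Gamma^J\widetilde{\phi}^\Delta)^{1/2} \lesssim (\eps + (C_1\eps)^2)\langle t+|x|\rangle^{-1}\langle t\rangle^{1/2}.
\]
Multiplying by $\langle t-|x|\rangle^{-1}$ from scaling and absorbing $\langle t\rangle^{1/2} \lesssim \langle t+|x|\rangle^{1/2}$ into a $\langle t+|x|\rangle^{-1/2}$ factor yields the desired bound.

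The main obstacle is the tight derivative count: the threshold $|I|\le N-2$ is saturated precisely by the budget ``one vector field for scaling $+$ two vector fields for Klainerman--Sobolev $+$ conformal energy at order $N+1$.'' Commutators arising from moving $\del^2$ past $\Gamma^I$, and from swapping $\Gamma^K$ past $\del$ inside the Klainerman--Sobolev step, must be re-estimated by the same mechanism at strictly lower orders, which is routine via Lemma \ref{lem:comm2}. Obtaining the genuinely-sharp $\langle t-|x|\rangle^{-1}$ decay for the free part $\widetilde{\phi}^0$ in the interior likewise relies on the scaling vector field and is where the polynomially-weighted initial data hypothesis becomes essential.
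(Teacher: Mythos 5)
Your argument is essentially the paper's own proof: the same decomposition $\widetilde{\phi} = \widetilde{\phi}^0 + \Delta\widetilde{\phi}^\Delta$, the conformal energy bound \eqref{eq:wave-con} at order $N+1$, the scaling relation $\langle t-|x|\rangle|\del w|\lesssim|L_0w|+|\Gamma w|$ applied to $w=\del\Gamma^I\widetilde{\phi}^\Delta$, and Klainerman--Sobolev with the derivative budget $|I|+3\le N+1$. The only cosmetic deviation is that you route $\Delta\widetilde{\phi}^\Delta$ through the equation as $\del_t^2\widetilde{\phi}^\Delta-|\Psi|^2$, whereas the paper bounds $\Gamma^I\Delta\widetilde{\phi}^\Delta$ directly by $\del\del\Gamma^{I'}\widetilde{\phi}^\Delta$; both land in the same place, and your slightly muddled remark that Klainerman--Sobolev ``costs only two vector fields'' does not affect the (correct) final count.
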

\begin{proof}
The conformal energy estimates in \eqref{eq:wave-con} give
$$
\big\| L_0 \del \Gamma^I \widetilde{\phi}^\Delta \big\|
\lesssim \big(\eps + (C_1 \eps)^2\big) \langle t \rangle^{1/2},
\qquad
|I| \leq N+1,
$$
which implies
$$
\sum_{|I_1| \leq 3, |I_2| \leq N-2} \big\| \Gamma^{I_1} L_0 \del \Gamma^{I_2} \widetilde{\phi}^\Delta \big\|
\lesssim \big(\eps + (C_1 \eps)^2\big) \langle t \rangle^{1/2}.
$$
The application of the Klainerman-Sobolev inequality \eqref{eq:K-S} deduces 
$$
\big| L_0 \del \Gamma^{I} \widetilde{\phi}^\Delta \big|
\lesssim
\big(\eps + (C_1 \eps)^2\big) \langle t+|x| \rangle^{-1/2},
\qquad
|I| \leq N-2.
$$
Similarly, we have
$$
\big| \Gamma \del \Gamma^{I} \widetilde{\phi}^\Delta \big|
\lesssim
\big(\eps + (C_1 \eps)^2\big) \langle t+|x| \rangle^{-1/2},
\qquad
|I| \leq N-2.
$$

Then, we recall the simple relation
$$
|\del w | \lesssim \langle t-|x| \rangle^{-1} \big( |L_0 w| + |\Gamma w| \big),
$$
which allows us to have
$$
\big| \del \del \Gamma^{I} \widetilde{\phi}^\Delta \big|
\lesssim
\big(\eps + (C_1 \eps)^2\big) \langle t+|x| \rangle^{-1/2} \langle t-|x| \rangle^{-1},
\qquad
|I| \leq N-2.
$$
Finally, the relation $\widetilde{\phi} = \widetilde{\phi}^0 + \Delta \widetilde{\phi}^\Delta$ implies
$$
\aligned
\big| \Gamma^I \widetilde{\phi} \big|
\lesssim
\big| \Gamma^I \widetilde{\phi}^0 \big| + \big| \Gamma^I \Delta \widetilde{\phi}^\Delta \big|
\lesssim
\big( \eps + (C_1 \eps)^2 \big) \langle t+|x| \rangle^{-1/2} \langle t-|x| \rangle^{-1},
\qquad
|I| \leq N-2.
\endaligned
$$

The proof is complete.
\end{proof}

As a complement of Lemma \ref{lem:sup-wave}, we have the following result, which gives us good pointwise decay of $\widetilde{\phi}$ away from the origin.
\begin{lemma}\label{lem:sup-wave2}
It holds
\bel{eq:pointwise-ext}
\big| \Gamma^I \widetilde{\phi} \big|
\lesssim
\big( \eps + (C_1 \eps)^2 \big) \langle |x| \rangle^{-1} \langle t-|x| \rangle^{-1/2},
\qquad
|I| \leq N-9.
\ee 

\end{lemma}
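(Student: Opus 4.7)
The plan is to split $\widetilde{\phi} = \widetilde{\phi}^0 + \Delta \widetilde{\phi}^\Delta$ via \eqref{eq:reform001} and estimate each piece separately, focusing on the exterior region $|x| \geq t/2$. In the complementary interior $|x| \leq t/2$, the claim is automatic from Lemma \ref{lem:sup-wave}, since there $\langle t+|x|\rangle^{-1/2}\langle t-|x|\rangle^{-1} \lesssim \langle t\rangle^{-3/2} \lesssim \langle|x|\rangle^{-1}\langle t\rangle^{-1/2}$. The main tool in the exterior is the $L^\infty$-$L^\infty$ estimate of Lemma \ref{lem:Japan}, whose very point is to manufacture a $\langle|x|\rangle^{-1}$ factor on $|\del u|$ for solutions $u$ of a 3D wave equation.

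For the free wave $\Gamma^I \widetilde{\phi}^0$, I would rely on a classical pointwise decay estimate for 3D linear waves with weighted initial data (obtained, for instance, by reducing to zero initial data via a standard extension and then invoking Lemma \ref{lem:Japan}); the weighted $L^2$ bounds on $(n_0, n_1)$ from Theorem \ref{thm:main1} then yield $|\Gamma^I \widetilde{\phi}^0| \lesssim \eps\langle|x|\rangle^{-1}\langle t-|x|\rangle^{-1/2}$ for $|I| \leq N-9$. For $\Gamma^I \Delta \widetilde{\phi}^\Delta$, I commute $\Gamma^I$ through $\Delta$ and reduce to estimating $|\del \del_a \Gamma^{I'} \widetilde{\phi}^\Delta|$ for $|I'| \leq N-9$. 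Setting $u := \del_a \Gamma^{I'} \widetilde{\phi}^\Delta$, this solves $-\Box u = \del_a \Gamma^{I'}|\Psi|^2$ (modulo harmless commutators) with zero initial data, so Lemma \ref{lem:Japan} applies with $\rho = 0$, $\kappa = 1$ and some small $\mu > 0$. Leibniz together with the pointwise bound $|\Gamma^K \Psi|(\tau, y) \lesssim C_1 \eps \langle \tau+|y|\rangle^{-3/2+\delta}$ (valid for $|K| \leq N-3$, which covers every multi-index appearing since $|I'|+2 \leq N-7$) controls the source by $(C_1 \eps)^2 \langle \tau+|y|\rangle^{-3+2\delta}$.

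The main obstacle is to bound the resulting weighted supremum
$$
(C_1 \eps)^2 \sup_{\tau \in [t_0, t]} \sup_{|y-x|\leq t-\tau} |y| \langle \tau+|y|\rangle^{-2+\mu+2\delta} \langle t-|y|\rangle^{1-\mu}
$$
by a quantity of size $(C_1 \eps)^2 \langle t-|x|\rangle^{1/2}$, so that the resulting bound $|\del u| \lesssim \langle|x|\rangle^{-1}\langle t-|x|\rangle^{-1}\cdot(\text{sup})$ produces the desired $\langle|x|\rangle^{-1}\langle t-|x|\rangle^{-1/2}$ factor. I would handle this by splitting the past light cone via the identity $(\tau+|y|) + (t-|y|) = t+\tau$ into the regions where $\tau+|y|$ or $t-|y|$ dominates, and by using the cone constraint $|y-x| \leq t-\tau$ together with the exterior hypothesis $|x| \geq t/2$ to compare $\langle t-|y|\rangle$ with $\langle t-|x|\rangle + \langle t-\tau\rangle$. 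Combining the exterior estimates for $\widetilde{\phi}^0$ and $\Delta \widetilde{\phi}^\Delta$ with Lemma \ref{lem:sup-wave} in the interior then yields the claim.
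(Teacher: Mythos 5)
Your overall architecture coincides with the paper's: decompose $\widetilde{\phi}=\widetilde{\phi}^0+\Delta\widetilde{\phi}^\Delta$, handle the free wave by a classical weighted decay estimate, and apply Lemma \ref{lem:Japan} with $\rho=0$, $\kappa=1$ to $\del_a\Gamma^{I'}\widetilde{\phi}^\Delta$ with source $\del_a\Gamma^{I'}|\Psi|^2$ and zero data. (The interior/exterior split is harmless but unnecessary; the paper applies the lemma globally, and the statement only claims $\langle|x|\rangle^{-1}$ decay.) The gap is in your final paragraph, which is where the actual estimate lives and which you leave as a plan. Worse, the inequality you set yourself there is false as stated: with the weight $\langle t-|y|\rangle^{1-\mu}$ read literally from \eqref{eq:L-infty}, take $\tau$ bounded, $|y|\simeq 1$, and $|x|$ close to $t$ (such points lie in the backward cone and in your exterior region); then $|y|\,\langle\tau+|y|\rangle^{-2+\mu+2\delta}\langle t-|y|\rangle^{1-\mu}\simeq t^{1-\mu}$ while your target $\langle t-|x|\rangle^{1/2}\simeq 1$, and no splitting along $(\tau+|y|)+(t-|y|)=t+\tau$ can repair this. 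The weight in Lemma \ref{lem:Japan} has to be understood as $\langle\tau-|y|\rangle^{1-\mu}$, evaluated at the source point, as in the cited Kubota--Yokoyama and Katayama--Kubo estimates; otherwise the paper's own application would fail for the same reason.

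Second, you bound the source using the $|K|\le N-3$ pointwise estimate, which costs $\langle\tau+|y|\rangle^{\delta}$ per factor, even though, as you yourself note, every multi-index appearing is $\le N-7$, where the $\delta$-free bound $|\Gamma^K\Psi|\lesssim C_1\eps\,\langle\tau+|y|\rangle^{-3/2}$ is available. This is not cosmetic: with the source only $\lesssim(C_1\eps)^2\langle\tau+|y|\rangle^{-3+2\delta}$, even the corrected weight yields $|y|\,\langle\tau+|y|\rangle^{1+\mu}\langle\tau-|y|\rangle^{1-\mu}\langle\tau+|y|\rangle^{-3+2\delta}\lesssim\langle\tau+|y|\rangle^{2\delta}$, which near the light cone is not dominated by $\langle t-|x|\rangle^{1/2}$. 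With the $\delta$-free source $\lesssim(C_1\eps)^2\langle\tau+|y|\rangle^{-3}$ the whole difficulty evaporates: $|y|\le\langle\tau+|y|\rangle$ and $\langle\tau-|y|\rangle\le\langle\tau+|y|\rangle$ give $|y|\,\langle\tau+|y|\rangle^{3/2}\langle\tau-|y|\rangle^{1/2}\langle\tau+|y|\rangle^{-3}\le 1$ (the paper takes $\mu=1/2$), so the supremum is $O((C_1\eps)^2)$ and one obtains the stronger bound $|\del\del\Gamma^{I}\widetilde{\phi}^\Delta|\lesssim(C_1\eps)^2\langle|x|\rangle^{-1}\langle t-|x|\rangle^{-1}$. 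The weaker exponent $\langle t-|x|\rangle^{-1/2}$ in \eqref{eq:pointwise-ext} is dictated by the free-wave part $\widetilde{\phi}^0$, not by the inhomogeneous part, so aiming for a supremum of size $\langle t-|x|\rangle^{1/2}$ was the wrong target from the outset.
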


\begin{proof}
Consider the $\widetilde{\phi}^\Delta$ equation in \eqref{eq:reform002}, and note that we have
$$
\sum_{|I| \leq N-9, |J_1| + |J_2| \leq 1} \big| \del^{J_1} \Omega^{J_2} \big( \del \Gamma^I |\Psi|^2 \big) \big|
\lesssim (C_1 \eps)^2 \langle t+|x| \rangle^{-3}.
$$
Then we apply Lemma \ref{lem:Japan}, with $\rho=0, \kappa=1, \mu = 1/2$, to obtain
$$
\big| \del \del \Gamma^I \widetilde{\phi}^\Delta \big|
\lesssim (C_1 \eps)^2 \langle |x| \rangle^{-1} \langle t-|x| \rangle^{-1},
\qquad
|I| \leq N-9.
$$
Finally, we recall that
$$
\big| \Gamma^I \widetilde{\phi} \big|
\lesssim
\big| \Gamma^I \widetilde{\phi}^0 \big|
+
\big| \Gamma^I \Delta \widetilde{\phi}^\Delta \big| 
\lesssim
\big( \eps + (C_1 \eps)^2 \big) \langle |x| \rangle^{-1} \langle t-|x| \rangle^{-1/2},
\qquad
|I| \leq N-9.
$$

\end{proof}

A combination of Lemma \ref{lem:sup-wave} and Lemma \ref{lem:sup-wave2} gives us the following pointwise decay result of $\widetilde{\phi}$ component.
\begin{proposition}\label{prop:wave-decay}
We have
\bel{eq:pointwise-all}
\big| \Gamma^I \widetilde{\phi} \big|
\lesssim
\big( \eps + (C_1 \eps)^2 \big) \langle  t + |x| \rangle^{-1} \langle t-|x| \rangle^{-1/2},
\qquad
|I| \leq N-9.
\ee 

\end{proposition}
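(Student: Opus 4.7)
The plan is to combine the two pointwise estimates already established, Lemma \ref{lem:sup-wave} (which is sharp near the light cone but weak at spatial infinity) and Lemma \ref{lem:sup-wave2} (which is sharp far from the origin but has no decay in $t$ alone), by performing a region split on the half-plane $\{t \geq t_0\} \times \RR^3$ based on the relative sizes of $t$ and $|x|$. Specifically, I would split into the interior region $\{|x| \leq t/2\}$ and the exterior region $\{|x| \geq t/2\}$, and show that in each region one of the two lemmas already yields the desired bound up to replacing some factors by equivalent ones.

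In the exterior region $|x| \geq t/2$, one has $\langle |x| \rangle \sim \langle t + |x| \rangle$, so Lemma \ref{lem:sup-wave2} directly gives
$$
\big| \Gamma^I \widetilde{\phi} \big|
\lesssim
\big(\eps + (C_1 \eps)^2\big) \langle |x| \rangle^{-1} \langle t-|x| \rangle^{-1/2}
\lesssim
\big(\eps + (C_1 \eps)^2\big) \langle t+|x| \rangle^{-1} \langle t-|x| \rangle^{-1/2}
$$
for $|I| \leq N-9$, as required.

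In the interior region $|x| \leq t/2$, the key observation is that $\langle t - |x| \rangle \geq t/2 \gtrsim \langle t + |x| \rangle$, so in particular one can write $\langle t - |x| \rangle^{-1/2} \lesssim \langle t + |x| \rangle^{-1/2}$. Invoking Lemma \ref{lem:sup-wave} (which is valid for $|I| \leq N-2$, hence in particular for $|I| \leq N-9$), we obtain
$$
\big| \Gamma^I \widetilde{\phi} \big|
\lesssim
\big(\eps + (C_1 \eps)^2\big) \langle t+|x| \rangle^{-1/2} \langle t-|x| \rangle^{-1}
=
\big(\eps + (C_1 \eps)^2\big) \langle t+|x| \rangle^{-1/2} \langle t-|x| \rangle^{-1/2} \langle t-|x| \rangle^{-1/2}.
$$
Replacing one of the $\langle t-|x| \rangle^{-1/2}$ factors by $\langle t+|x| \rangle^{-1/2}$ using the interior inequality above yields precisely the claimed bound.

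There is no real obstacle here: both needed pointwise bounds are already in hand, and the only step is the rebalancing of weights in the interior region. Combining the two regional estimates gives a uniform bound on all of $\{t \geq t_0\} \times \RR^3$, completing the proof of Proposition \ref{prop:wave-decay}.
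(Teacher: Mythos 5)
Your proof is correct and matches the paper's intent exactly: the paper states Proposition \ref{prop:wave-decay} as ``a combination'' of Lemmas \ref{lem:sup-wave} and \ref{lem:sup-wave2}, and the split at $|x| = t/2$ (with $\langle t+|x|\rangle \sim \langle |x|\rangle$ outside and $\langle t-|x|\rangle \gtrsim \langle t+|x|\rangle$ inside, allowing the weight rebalancing) is precisely the standard way to do this, as the paper itself hints at in the remark following Lemma \ref{lem:Japan}.
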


\begin{proposition}\label{prop:contraction002}
Let $(\Psi, \phi) \in X$, and denote $\big(\widetilde{\Psi}, \widetilde{\phi}\big) = T (\Psi, \phi)$, then we have
\bel{eq:refine001}
\aligned
\big\| \big( \widetilde{\Psi}, \widetilde{\phi} \big) \big\|_X
\lesssim \eps + (C_1 \eps)^{3/2}.
\endaligned
\ee
\end{proposition}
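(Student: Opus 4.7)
The plan is to control each of the four contributions to $\big\|(\widetilde\Psi,\widetilde\phi)\big\|_X$ separately by $\eps+(C_1\eps)^2$, which under smallness of $\eps$ is majorized by $\eps+(C_1\eps)^{3/2}$. The sharp pointwise decay of $\widetilde\phi$ at $|I|\leq N-9$ has already been obtained in Proposition \ref{prop:wave-decay}, so three estimates remain: the $L^2$ bound on $\Gamma^I\widetilde\phi$ at $|I|\leq N+1$, the ghost weight energy for $\Gamma^I\widetilde\Phi$ at $|I|\leq N+1$, and the pointwise decay of $\Gamma^I\widetilde\Phi$ at $|I|\leq N-3$ and $|I|\leq N-7$.

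For $\|\Gamma^I\widetilde\phi\|$ I would exploit the decomposition $\widetilde\phi=\widetilde\phi^0+\Delta\widetilde\phi^\Delta$ from \eqref{eq:reform001}. The free-wave piece $\widetilde\phi^0$ is controlled by the conformal energy estimate \eqref{eq:conformal-EE} applied to $\Gamma^I\widetilde\phi^0$, which is conserved in time and starts from the weighted data. For the inhomogeneous piece, I would apply the natural energy estimate \eqref{eq:NEE} to $\del\Gamma^I\widetilde\phi^\Delta$, whose source is $\del\Gamma^I|\Psi|^2$; splitting by Leibniz into $\Gamma^{K_1}\Psi\,\del\Gamma^{K_2}\Psi$ and using the pointwise decay $|\Gamma^{K_1}\Psi|\lesssim C_1\eps\langle t+|x|\rangle^{-3/2}$ at $|K_1|\leq N-7$ together with $\|\del\Gamma^{K_2}\Psi\|\lesssim C_1\eps$ at $|K_2|\leq N+1$ makes the source $\langle t\rangle^{-3/2}$-integrable, giving $\|\del\del\Gamma^I\widetilde\phi^\Delta\|\lesssim\eps+(C_1\eps)^2$ and hence $\|\Delta\Gamma^I\widetilde\phi^\Delta\|\lesssim\eps+(C_1\eps)^2$.

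The heart of the proof is the ghost weight energy for $\widetilde\Phi$. I would commute $\Gamma^I$ with $-\Box+1$ in the first equation of \eqref{eq:solution-map}, apply Proposition \ref{prop:gst}, and reduce to bounding $\int_{t_0}^t\|\Gamma^I(\phi\Psi)\|\,dt'$ via Cauchy--Schwarz and absorption. Splitting $\Gamma^I(\phi\Psi)=\sum\Gamma^{I_1}\phi\,\Gamma^{I_2}\Psi$ with $|I_1|+|I_2|\leq N+1$: when $|I_1|\leq N-9$ the pointwise bound $|\Gamma^{I_1}\phi|\lesssim C_1\eps\langle t\rangle^{-1}\langle t-|x|\rangle^{-1/2}$ yields, via Cauchy--Schwarz in time,
\[
\int\|\Gamma^{I_1}\phi\,\Gamma^{I_2}\Psi\|\,dt'\lesssim\Big(\int\|\langle t'-|x|\rangle^{1/2+\delta}\Gamma^{I_1}\phi\|_{\infty}^{2}\,dt'\Big)^{1/2}\Big(\int\|\Gamma^{I_2}\Psi/\langle t'-|x|\rangle^{1/2+\delta}\|^{2}\,dt'\Big)^{1/2}\lesssim (C_1\eps)^{2},
\]
where the second factor is supplied by the ghost-weight integral built into the $X$-norm. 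In the complementary case $|I_1|\geq N-8$ one has $|I_2|\leq 9\leq N-3$, so the pointwise decay $|\Gamma^{I_2}\Psi|\lesssim C_1\eps\langle t\rangle^{-3/2+\delta}$ combined with $\|\Gamma^{I_1}\phi\|\lesssim C_1\eps$ makes the time integrand $\langle t\rangle^{-3/2+\delta}$, hence integrable. Finally, for the pointwise decays of $\Gamma^I\widetilde\Phi$ I would apply Proposition \ref{prop:G1} with source $-\Gamma^I(\phi\Psi)$: a Leibniz split of $\sum_{|J|\leq 4}\|\langle s+|x|\rangle\Gamma^{I+J}(\phi\Psi)\|$, balancing $L^\infty$ decay of one factor against the $L^2$ energy of the other, produces a uniform bound $(C_1\eps)^2$ (first form, matching the $\langle t+|x|\rangle^{-3/2+\delta}$ decay at $|I|\leq N-3$) and, at $|I|\leq N-7$, an $\langle s\rangle^{-\delta'}$-decaying bound obtained by keeping the derivatives on $\Psi$ below $N-7$ in the split, which feeds the second form and yields the sharp $\langle t+|x|\rangle^{-3/2}$ decay.

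The main obstacle is precisely the ghost weight step: without pairing the pointwise $\langle t-|x|\rangle^{-1/2}$ decay of the wave component $\phi$ against the time-integrated weighted $L^2$ control $\int\|\Gamma^I\Psi/\langle t'-|x|\rangle^{1/2+\delta}\|^2\,dt'\lesssim(C_1\eps)^2$ of the Klein-Gordon component $\Psi$, a naive estimate of $\int\|\phi\Psi\|\,dt'$ only yields logarithmic-in-time growth and ruins the uniform energy bound. The index bookkeeping is just compatible with $N\geq 15$, and once all four $X$-norm terms are bounded by $\eps+(C_1\eps)^2$, smallness of $\eps$ converts this to $\eps+(C_1\eps)^{3/2}$.
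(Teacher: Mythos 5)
Your proposal is correct and follows essentially the same route as the paper: the same decomposition $\widetilde\phi=\widetilde\phi^0+\Delta\widetilde\phi^\Delta$ with conformal/natural energy estimates for the two pieces, the same ghost-weight pairing of $\langle t'-|x|\rangle^{1/2+\delta}$ pointwise decay of $\phi$ against the weighted spacetime $L^2$ integral of $\Psi$ with the same index split, and the same two-case application of Proposition \ref{prop:G1} for the Klein--Gordon pointwise decay. The only cosmetic difference is that the paper obtains the $(C_1\eps)^{3/2}$ power directly as the square root of the cubic bound $\eps^2+(C_1\eps)^3$ on the ghost energy, whereas you convert $(C_1\eps)^2$ to $(C_1\eps)^{3/2}$ by smallness; both are fine.
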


\begin{proof}
Recall the definition of the $X$-norm in \eqref{eq:X-norm}, we need to show that each of terms has the desired bound. We act the vector fields $\Gamma^I, \Gamma^J, \del \Gamma^I$, with $|I| \leq N+1, |J| \leq N$, to the reformulated equations to get
$$
\aligned
&-\Box \Gamma^I \widetilde{\Phi} + \Gamma^I \widetilde{\Phi} = - \Gamma^I \big(\phi \Psi \big),
\\
&-\Box \Gamma^J \widetilde{\phi}^0 = 0,
\qquad
-\Box \del \Gamma^I \widetilde{\phi}^\Delta = \del \Gamma^I |\Psi|^2, 
\\
&\big(\widetilde{\Phi}, \del_t \widetilde{\Phi}, \widetilde{\phi}^0, \del_t \widetilde{\phi}^0,  \widetilde{\phi}^\Delta, \del_t \widetilde{\phi}^\Delta \big)(t_0)
=
(E_0, E_1, n_0, n_1, 0, 0),
\endaligned
$$
with the relation $\widetilde{\phi} = \widetilde{\phi}^0 + \Delta \widetilde{\phi}^\Delta$.

\textbf{Step 1.}
First, we show the bound for ghost energy for the $\widetilde{\Psi}$ component.
By the energy estimates \eqref{eq:GEE}, we have
$$
\aligned
\Ecal_{gst, 1} (t, \Gamma^I \widetilde{\Psi})
&\lesssim
\Ecal_{gst, 1} (t_0, \Gamma^I \widetilde{\Psi})
+
\int_{t_0}^t \big\| \del_t \Gamma^I \Psi \Gamma^I \big(\phi \Psi \big) \big\|_{L^1} \, dt'
\\
&\lesssim 
\eps^2
+
\int_{t_0}^t \big\| \del_t \Gamma^I \Psi \big\| \big\| \Gamma^I \big(\phi \Psi \big) \big\| \, dt'
\\
&\lesssim 
\eps^2
+
C_1 \eps \int_{t_0}^t \big\| \Gamma^I \big(\phi \Psi \big) \big\| \, dt'.
\endaligned
$$
We proceed to get
$$
\aligned
\Ecal_{gst, 1} (t, \Gamma^I \widetilde{\Psi})
&\lesssim 
\eps^2
+
C_1 \eps \int_{t_0}^t \big\| \Gamma^I \big(\phi \Psi \big) \big\| \, dt'
\\
&\lesssim
\epsilon^2
+
C_1 \eps \sum_{|I_1| + |I_2| \leq N+1} \int_{t_0}^t \big\| \Gamma^{I_1} \phi \Gamma^{I_2} \Psi \big\| \, dt'
\\
&\lesssim
\epsilon^2
+
C_1 \eps \sum_{|I_1|\leq N-9, |I_2| \leq N+1} \int_{t_0}^t \big\| \langle t'-|x| \rangle^{1/2+ \delta} \Gamma^{I_1} \phi \big\|_{L^\infty}  \Big\| {\Gamma^{I_2} \Psi \over \langle t'-|x| \rangle^{1/2+\delta} } \Big\| \, dt'
\\
&+
C_1 \eps \sum_{|I_1|\leq N+1, |I_2| \leq N-3} \int_{t_0}^t \big\| \Gamma^{I_1} \phi \big\|  \big\| \Gamma^{I_2} \Psi \big\|_{L^\infty} \, dt'
\\
&\lesssim
\eps^2 
+
(C_1 \eps)^3 \Big( \int_{t_0}^t \langle t' \rangle^{-2+2\delta} \, dt' \Big)^{1/2} 
+
(C_1 \eps)^3 \int_{t_0}^t \langle t' \rangle^{-3/2+\delta} \, dt'
\lesssim
\eps^2 + (C_1 \eps)^3.
\endaligned
$$

\textbf{Step 2.} We now show the pointwise estimates for the $\widetilde{\Psi}$ component.

On one hand, for $|I| \leq N+1$ we have
$$
\aligned
&\quad
\big\| \langle t+|x| \rangle \Gamma^I (\phi \Psi) \big\|
\\
&\lesssim
\sum_{|I_1|\leq N-3, |I_2| \leq N+1 } \big\| \langle t+|x| \rangle \Gamma^{I_1} \Psi \big\|_{L^\infty}  \big\| \Gamma^{I_2} \phi \big\|
+
\sum_{|I_1|\leq N+1, |I_2| \leq N-2 } \big\| \Gamma^{I_1} \Psi \big\|  \big\| \langle t+|x| \rangle \Gamma^{I_2} \phi \big\|_{L^\infty}
\\
&\lesssim (C_1 \eps)^2.
\endaligned
$$
Then the application of the Sobolev inequality in Proposition \ref{prop:G1} deduces that
$$
\big| \Gamma^I \widetilde{\Psi} \big|
\lesssim \big( \eps + (C_1 \eps)^2 \big) \langle t+|x| \rangle^{-3/2+\delta},
\qquad
|I| \leq N-3.
$$

With $|I| \leq N-3$, we note that
$$
\aligned
\big\| \langle t+|x| \rangle \Gamma^I (\phi \Psi) \big\|
\lesssim
\sum_{|I_1|, |I_2| \leq N-3 } \big\| \langle t+|x| \rangle \Gamma^{I_1} \Psi \big\|_{L^\infty}  \big\| \Gamma^{I_2} \phi \big\|
\lesssim (C_1 \eps)^2 \langle t \rangle^{-\delta}.
\endaligned
$$
Thus the Sobolev inequality in Proposition \ref{prop:G1} implies that
$$
\big| \Gamma^I \widetilde{\Psi} \big|
\lesssim \big( \eps + (C_1 \eps)^2 \big) \langle t+|x| \rangle^{-3/2},
\qquad
|I| \leq N-7.
$$

\textbf{Step 3.} We turn to derive the bounds for the $L^2$ norms of $\widetilde{\phi}$ component.

First, for the $\widetilde{\phi}^0$ part, we easily get
$$
\aligned
\Ecal (t, \Gamma^J \widetilde{\phi}^0)^{1/2}
+
\Ecal_{con} (t, \Gamma^J \widetilde{\phi}^0)^{1/2}
\leq
\Ecal (t_0, \Gamma^J \widetilde{\phi}^0)^{1/2}
+
\Ecal_{con} (t_0, \Gamma^J \widetilde{\phi}^0)^{1/2}
\lesssim
\eps,
\qquad
|J| \leq N,
\endaligned
$$
which gives
$$
\big\| \Gamma^I \widetilde{\phi}^0 \big\|
\lesssim
\eps,
\qquad
|I| \leq N+1.
$$

Next, we treat the $\widetilde{\phi}^\Delta$ part, and the energy estimates imply
$$
\aligned
\Ecal (t, \del \Gamma^I \widetilde{\phi}^\Delta)^{1/2}
&\lesssim
\Ecal (t_0, \del \Gamma^I \widetilde{\phi}^\Delta)^{1/2}
+
\int_{t_0}^t \big\| \del \Gamma^I |\Psi|^2 \big\| \, dt'
\\
&\lesssim
\eps + (C_1 \eps)^2 \int_{t_0}^t \langle t' \rangle^{-3/2} \, dt'
\lesssim
\eps + (C_1 \eps)^2,
\qquad
|I| \leq N+1,
\endaligned
$$
which means
$$
\big\| \del \del \Gamma^I \widetilde{\phi}^\Delta  \big\|
\lesssim
\eps + (C_1 \eps)^2,
\qquad
|I| \leq N+1.
$$

Gathering the above estimates yields
$$
\big\| \Gamma^I \widetilde{\phi} \big\|
\lesssim
\eps + (C_1 \eps)^2,
\qquad
|I| \leq N+1.
$$

\textbf{Step 4.}
Finally, by recalling the estimates in Proposition \ref{prop:wave-decay}, we find
$$
\big\| \big( \widetilde{\Psi}, \widetilde{\phi} \big) \big\|_X
\lesssim \eps + (C_1 \eps)^{3/2}.
$$
Thus we finish the proof.
\end{proof}

In the same way as the proofs for Proposition \ref{prop:wave-decay} and Proposition \ref{prop:contraction002}, we have the following result.
\begin{proposition}\label{prop:contraction003}
For two elements $(\Psi, \phi), (\Psi', \phi') \in X$, we denote $\big(\widetilde{\Psi}, \widetilde{\phi}\big) = T (\Psi, \phi), \big(\widetilde{\Psi'}, \widetilde{\phi'}\big) = T (\Psi', \phi')$, then we have
\bel{eq:refined}
\aligned
\big\| \big( \widetilde{\Psi} - \widetilde{\Psi'}, \widetilde{\phi} - \widetilde{\phi'} \big)\big\|_X
&\lesssim (C_1 \eps)^{1/2} \big\| (\Psi - \Psi', \phi - \phi') \big\|_X.
\endaligned
\ee

\end{proposition}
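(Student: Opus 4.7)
The plan is to mirror the proof of Proposition \ref{prop:contraction002} applied to the differences. Setting $U := \widetilde\Psi - \widetilde{\Psi'}$ and $V := \widetilde\phi - \widetilde{\phi'}$, one reads off from \eqref{eq:solution-map} the linear system
\begin{align*}
-\Box U + U &= -(\phi - \phi')\Psi - \phi'(\Psi - \Psi'),\\
-\Box V &= \Delta\bigl((\Psi - \Psi')\cdot (\Psi + \Psi')\bigr),
\end{align*}
with zero initial data at $t = t_0$. The key structural feature is that in every quadratic source above, exactly one factor is a difference (to be measured by $\|(\Psi-\Psi',\phi-\phi')\|_X$), while the other factor belongs to $X$ and hence carries a smallness $C_1\eps$. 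As before, decompose $V = V^0 + \Delta V^\Delta$, where now $V^0 \equiv 0$ because of the vanishing initial data, and $V^\Delta$ solves $-\Box V^\Delta = (\Psi - \Psi')\cdot (\Psi + \Psi')$ with zero data.

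Next I would apply the ghost weight energy inequality \eqref{eq:GEE} to $\Gamma^I U$ for $|I|\leq N+1$. Following the same $L^\infty$--$L^2$ splittings as in Step 1 of Proposition \ref{prop:contraction002}, and exploiting the spacetime bound $\int_{t_0}^t \|\Gamma^I(\Psi-\Psi')/\langle t'-|x|\rangle^{1/2+\delta}\|^2\,dt' \lesssim \|(\Psi-\Psi',\phi-\phi')\|_X^2$ encoded in the $X$-norm through the ghost weight energy, every time integral that was integrable in Proposition \ref{prop:contraction002} remains integrable here, with one factor of $C_1\eps$ replaced by $\|(\Psi-\Psi',\phi-\phi')\|_X$. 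This yields
\[
\sum_{|I|\leq N+1}\Ecal_{gst,1}\bigl(t,\Gamma^I U\bigr)^{1/2} \lesssim (C_1\eps)^{1/2}\,\big\|(\Psi-\Psi',\phi-\phi')\big\|_X,
\]
and Proposition \ref{prop:G1} then upgrades this to the pointwise bounds on $U$ at regularity levels $N-3$ and $N-7$ required by the $X$-norm, exactly as in Step 2 of Proposition \ref{prop:contraction002}.

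Third, the wave component $V$ is handled analogously to Steps 3 and 4 of Proposition \ref{prop:contraction002}. Since $V^0\equiv 0$, the $L^2$ bound on $\Gamma^I V$ reduces to an energy bound on $\del\del\Gamma^I V^\Delta$, which follows from \eqref{eq:NEE} with the integrable source $\|\del\Gamma^I((\Psi-\Psi')\cdot(\Psi+\Psi'))\| \lesssim (C_1\eps)\|(\Psi-\Psi',\phi-\phi')\|_X \langle t'\rangle^{-3/2}$. For the pointwise decay component of the $X$-norm, the conformal energy estimate \eqref{eq:conformal-EE} gives $\Ecal_{con}(t,\del\Gamma^I V^\Delta)^{1/2} \lesssim (C_1\eps)\|(\Psi-\Psi',\phi-\phi')\|_X \langle t\rangle^{1/2}$, after which Lemmas \ref{lem:sup-wave} and \ref{lem:sup-wave2} --- the former using the scaling relation \eqref{eq:scaling} together with the Klainerman--Sobolev inequality \eqref{eq:K-S}, the latter using Lemma \ref{lem:Japan} --- produce the combined decay $\langle t+|x|\rangle^{-1}\langle t-|x|\rangle^{-1/2}$ in the spirit of Proposition \ref{prop:wave-decay}.

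The only substantive obstacle, as in Proposition \ref{prop:contraction002}, is bookkeeping: one must consistently place the difference factor into the slot measured by $\|(\Psi-\Psi',\phi-\phi')\|_X$ (top-order $L^2$ against low-order $L^\infty$, or vice versa), so that the remaining factor sits in $X$ and contributes its power of $C_1\eps$. No new analytic ingredient is needed beyond the estimates already proved; the integrable time weights from Proposition \ref{prop:contraction002} carry over verbatim, which is exactly what yields the final contraction factor $(C_1\eps)^{1/2}$ and, for $\eps$ small enough, closes the fixed-point argument.
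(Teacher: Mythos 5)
Your proposal is correct and coincides with the paper's approach: the paper offers no details for this proposition, stating only that it follows ``in the same way'' as Propositions \ref{prop:wave-decay} and \ref{prop:contraction002}, and your difference system, the decomposition $V=V^0+\Delta V^\Delta$ with $V^0\equiv 0$, and the one-difference-factor-per-product bookkeeping are exactly the intended argument. The only delicate point is the factor $\|\del_t\Gamma^I U\|$ inside the ghost-weight inequality \eqref{eq:GEE}: it must be absorbed by the left-hand side (dividing the resulting quadratic inequality by $\sup_{t'}\Ecal_{gst,1}(t',\Gamma^I U)^{1/2}$), since bounding it crudely by $C_1\eps$ would leave only $\|(\Psi-\Psi',\phi-\phi')\|_X^{1/2}$ after taking square roots and would destroy the contraction.
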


Thus we are now able to provide the proof for Proposition \ref{prop:contraction001}.

\begin{proof}[Proof of Proposition \ref{prop:contraction001}]
We choose $C_1 \gg 1$ very large, and $\eps \ll 1$ sufficiently small, so that the estimates in Proposition \ref{prop:contraction002} and Proposition \ref{prop:contraction003} lead us to
$$
\aligned
\big\| \big(\widetilde{\Psi}, \widetilde{\phi}\big) \big\|_X
\leq {1\over 2} C_1 \eps,
\\
\big\| \big( \widetilde{\Psi} - \widetilde{\Psi'}, \widetilde{\phi} - \widetilde{\phi'} \big)\big\|_X
&\leq {1\over 2} \big\| (\Psi - \Psi', \phi - \phi') \big\|_X.
\endaligned
$$

The proof is done.
\end{proof}

Finally, we provide the proof for the main theorem.
\begin{proof}[Proof of Theorem \ref{thm:main1}]
By the fixed point theorem, we know there exists a unique element $(E, n) \in X$, such that
$$
(E, n) = T(E, n).
$$
This means that $(E, n)$ is the solution to the original Klein-Gordon-Zakharov equations in \eqref{eq:model-KGZ}. According to the definition of the solution space $X$, we know that the sharp pointwise decay results \eqref{eq:thm-decay} are valid, and the energy for $(E, n)$ is uniformly bounded.

\end{proof}

%==============================================================================================

%===============================================================================

%\section*{Acknowledgements} 
%
%The author is very grateful to the anonymous referees for their patience with the manuscript. Their suggestions and comments help greatly improve the presentation and precision of the paper. 

%==========================================================================

%\newpage

{\footnotesize

\end{document}